\newtheorem{theorem}{Theorem}[section]
\newtheorem{lemma}[theorem]{Lemma}
\newtheorem{corollary}[theorem]{Corollary}
\theoremstyle{definition}
\newcommand{\tf}{\tfrac}
\theoremstyle{remark}
\begin{document}

\title[Asymptotics and exact formulas for Zagier polynomials]{Asymptotics and exact formulas for Zagier polynomials} 

\author[A. Dixit]{Atul Dixit}
\address{Department of Mathematics,
Tulane University, New Orleans, LA 70118, USA}
\email{adixit@tulane.edu}
\curraddr{Department of Mathematics, Indian Institute of Technology Gandhinagar, 382355, Gujarat, India}
\email{adixit@iitgn.ac.in}

\author[M. L. Glasser]{M. Lawrence Glasser}
\address{Department of Physics, Clarkson University, Potsdam, NY 13369-5820, USA}
\email{laryg@clarkson.edu}


\author[V. H. Moll]{Victor H. Moll}
\address{Department of Mathematics,
Tulane University, New Orleans, LA 70118, USA}
\email{vhm@tulane.edu}

\author[C. Vignat]{Christophe Vignat}
\address{Department of Mathematics,
Tulane University, New Orleans, LA 70118, USA and 
L.S.S. Supelec, Universite d'Orsay, France}
\email{cvignat@tulane.edu}

\subjclass[2010]{Primary 11B68, 33C10; Secondary 33C45, 41A60.}

\date{\today}

\keywords{Bernoulli numbers, Zagier polynomials, Bessel functions, Chebyshev polynomials, Fourier expansions, differential equations, asymptotics, diffraction theory} 
\thanks{None of the authors have any competing interests in the manuscript.}

\begin{abstract}
In 1998 Don Zagier introduced the modified Bernoulli numbers $B_{n}^{*}$ and showed that they 
satisfy amusing variants of some properties of Bernoulli numbers. In particular, 
he studied the asymptotic behavior of $B_{2n}^{*}$, and also obtained an exact formula for them, the motivation for which came from 
the representation of $B_{2n}$ in terms of the Riemann zeta function $\zeta(2n)$. The 
modified Bernoulli numbers were recently generalized to Zagier polynomials $B_{n}^{*}(x)$. 
For $0<x<1$, an exact formula for 
$B_{2n}^{*}(x)$ involving infinite series of Bessel function of the second kind and Chebyshev polynomials, 
that yields Zagier's formula in a limiting case, is established here. Such series arise in diffraction theory. 
An analogous formula for $B_{2n+1}^{*}(x)$ 
is also presented. The $6$-periodicity of $B_{2n+1}^{*}$ is deduced as a limiting case of it. These formulas are reminiscent of the Fourier expansions of Bernoulli polynomials. Some new results, 
for example, the one yielding the derivative of the Bessel function of the first kind with respect to its order as 
the Fourier coefficient of a function involving Chebyshev polynomials, are obtained in the course of proving these
 exact formulas. The asymptotic behavior of 
Zagier polynomials is also derived from them. Finally, a Zagier-type exact formula is obtained for $B_{2n}^{*}\left(-\frac{3}{2}\right)+B_{2n}^{*}$.
\end{abstract}

\maketitle

\newcommand{\ba}{\begin{eqnarray}}
\newcommand{\ea}{\end{eqnarray}}
\newcommand{\ift}{\int_{0}^{\infty}}
\newcommand{\nn}{\nonumber}
\newcommand{\no}{\noindent}
\newcommand{\lf}{\left\lfloor}
\newcommand{\rf}{\right\rfloor}
\newcommand{\realpart}{\mathop{\rm Re}\nolimits}
\newcommand{\imagpart}{\mathop{\rm Im}\nolimits}
\newcommand{\pFq}[5]{\ensuremath{{}_{#1}F_{#2} \left( \genfrac{}{}{0pt}{}{#3}
{#4} \bigg| {#5} \right)}}

\newtheorem{Definition}{\bf Definition}[section]
\newtheorem{Thm}[Definition]{\bf Theorem} 
\newtheorem{Example}[Definition]{\bf Example} 
\newtheorem{Lem}[Definition]{\bf Lemma} 
\newtheorem{Cor}[Definition]{\bf Corollary} 
\newtheorem{Prop}[Definition]{\bf Proposition} 
\numberwithin{equation}{section}

\section{Introduction}
\label{sec-intro} 
\setcounter{equation}{0}
The Bernoulli polynomials $B_{n}(x)$ are defined by means of the generating function \cite[p.~3]{temme-1996a}
\begin{equation*}
\frac{ze^{xz}}{e^z-1}=\sum_{n=0}^{\infty}\frac{B_n(x)z^n}{n!}\hspace{8mm}(|z|<2\pi).
\end{equation*}
In \cite{zagier-1998a}, Don Zagier introduced the modified Bernoulli numbers 
\begin{equation}
B_{n}^{*} := \sum_{r=0}^{n} \binom{n+r}{2r} \frac{B_{r}}{n+r} \hspace{5mm} (n\in\mathbb{N}),
\label{mod-ber-1}
\end{equation}
\noindent
where $B_n:=B_n(0)$ are the Bernoulli numbers. 
In this paper, he proved three remarkable results for the sequence $\{B_{n}^{*}\}$: \\

\noindent
(A) The value of $B_{n}^{*}$ for $n$ odd is $6$-periodic: more precisely, it is 
given by 

\begin{center}
\begin{tabular}{c|c c c c c c} 
$n \bmod 12$ & $1$ & $3$ & $5$ & $7$ & $9$ & $11$ \\
\hline 
$B_{n}^{*}$ & $3/4$ & $- 1/4$ & $-1/4$ & $1/4$ & $1/4$ & $-3/4$ 
\end{tabular}.
\end{center}

\smallskip

\noindent
(B) The fractional part of the number $\tilde{B}_{2n}:= 4nB_{2n}^{*}-B_{2n}$ satisfies
\begin{equation*}
\tilde{B}_{2n} \equiv \sum_{\begin{stackrel}{(p+1)|2n} {p \text{ prime} }
\end{stackrel}}
\frac{1}{p} \bmod 1 \quad (n\in\mathbb{N}). 
\end{equation*}

\smallskip

\noindent
(C) Let $n\in\mathbb{N}$. Then $B_{2n}^{*}$ is asymptotically equal to 
$(-1)^{n-1}(2 \pi)^{-2n} (2n-1)!$ for $n$ large, and is 
given much more precisely by the approximation 
\begin{equation}\label{zagier-asym}
B_{2n}^{*} \approx (-1)^{n} \pi Y_{2n}(4 \pi) \quad ( n \to \infty),
\end{equation}
\noindent
where $Y_{n}(z)$ denotes the Bessel function of the second kind of integer order $n$ defined by \cite[p.~64]{watson-1966a}
\begin{equation}\label{bessely-int}
Y_{n}(z)=\lim_{\nu\to n}Y_{\nu}(z),
\end{equation}
where $Y_{\nu}(z)$ is the Bessel function of the second kind of non-integer order $\nu$ defined by
\begin{equation*}
Y_{\nu}(z)=\frac{J_{\nu}(z)\cos\left(\nu\pi\right)-J_{-\nu}(z)}{\sin\left(\nu\pi\right)},
\end{equation*}
with $J_{\nu}(z)$ being the Bessel function of the first kind \cite[p.~40]{watson-1966a}
\begin{equation}\label{besselj-int}
J_{\nu}(z)=\sum_{m=0}^{\infty}\frac{(-1)^m(z/2)^{2m+\nu}}{m!\Gamma(m+1+\nu)}.
\end{equation}
The sign $\approx$ in \eqref{zagier-asym} means that as $n\to\infty$, the relative error 
between the two sides decays more rapidly than any polynomial power $n^{-k}$. In particular, it yields the asymptotic 
\begin{equation}
\label{zag-asym}
B_{2n}^{*} \sim (-1)^{n} \pi Y_{2n}(4 \pi) \quad ( n \to \infty).
\end{equation}

The result on the $6$-periodicity of 
$B_{n}^{*}$ for odd $n$ first arose in Zagier's work \cite{zagier-1990b}, where he obtained a new proof, based on the theory of
periods of modular forms, of the Eichler-Selberg trace formula for the trace of Hecke operator $T_{\ell}$ acting 
on modular forms on $\text{SL}_2{(\mathbb{Z})}$. The method of the proof gave a formula for these traces in a form 
different than the usual, and involved Bernoulli numbers. The special case $\ell=1$ gave the dimension of
 $M_{k}\left(\text{SL}_2{\left(\mathbb{Z}\right)}\right)$ ($k$ even) in terms of $B_{k-1}^{*}$ and the equality of this formula
 with the standard dimension formula required the $6$-periodicity to hold.

The result in $(A)$ was extended in \cite{dixit-2014a} to the so-called 
Zagier polynomials 
\begin{equation*}
B_{n}^{*}(x) = \sum_{r=0}^{n} \binom{n+r}{2r} \frac{B_{r}(x)}{n+r} \hspace{5mm} (n>0),
\end{equation*}
\noindent
where $B_{n}^{*}(0) = B_{n}^{*}$. It should be mentioned here that these polynomials were
briefly studied by Zagier himself, and appear in an exercise in \cite[p.~122, Exercise 20]{cohen-2008a}. Specifically, he
 obtained some results associated with their generating function $\sum_{n=1}^{\infty}B_{n}^{*}(x)z^{n}$,
 viewed as a formal power series. However, an explicit formula for this generating function in terms
 of the digamma function was first obtained in \cite[Theorem 3.1]{dixit-2014a}.

As shown in \cite[Theorem 1.2]{dixit-2014a},  
$\{B_{2n+1}^{*}(j)\}_{n=0}^{\infty}$ is periodic and non-constant precisely when $j \in 
\{ -3, \, -2, \, -1, \, 0\}$. The period is $6$ for $j=-3, \, 0$, and $2$ 
for $j=-2, \, -1$. Moreover $\{B_{2n+1}^{*}(j)+n\}_{n=0}^{\infty}$ is periodic if and only if  
$j=-4$. Equivalently, using symmetry result for Zagier polynomials (see \eqref{refsym} below), $\{-B_{2n+1}^{*}(j)+n\}_{n=0}^{\infty}$ is periodic if and only if  
$j=1$. The only other value of $j$ that yields a periodic example is $j=-3/2$, 
in which case we get the vanishing sequence since 
$B_{2n+1}^{*}(-3/2) \equiv 0$. Similar periodicity results hold for the sequence 
$\{B_{2n}^{*}(-1-j) - B_{2n}^{*}(-1)\}_{n=1}^{\infty}$, see \cite{dixit-2014a}.  

\smallskip

The second paper in this series \cite{dixit-2014b} was inspired from property (B) above, and the  
arithmetic nature of Zagier polynomials with integer arguments was studied there.  Let $\alpha_{n}$ and $\gamma_{n,j}$ respectively be the denominators of $B_{n}^{*}$  
and $B_{n}^{*}(j)$ in reduced forms, where $j \in 
\mathbb{Z}$ and $n \in \mathbb{N}$. Again, two different behaviors, one for $n$ even 
and another for $n$ odd, are observed. The fact that $\alpha_{2n+1}=4$ for all $n\in\mathbb{N}$ was proved in 
\cite{dixit-2014a}. It was also conjectured there that $4$ divides $\alpha_{2n}$ for all $n\in\mathbb{N}$. This
conjecture was established in \cite{dixit-2014b} in the form of the following theorem, which 
also implies that $\gamma_{n,j}$ is independent of $j$.
\begin{theorem}
\label{thm:2adic}
  Let $p$ be a prime and let $\nu_{p}(\ell)$ denote the $p$-adic valuation of $\ell$ (that is, the highest power of $p$ that divides $\ell$). For $n\in\mathbb{N}$,
  \begin{align*}
    \nu_2(\alpha_n) = -\nu_2(B^*_n) = 2 + \nu_2(n)
    - \begin{cases} 1 & \text{if $n \equiv 6 \bmod 12$}, \\
      2 & \text{if $n \equiv 0 \bmod 12$}, \\
      0 & \text{otherwise.} \end{cases}
  \end{align*}
\end{theorem}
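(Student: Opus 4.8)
The plan is to split on the parity of $n$. When $n$ is odd the claim is essentially immediate from Zagier's property (A): $B_{n}^{*}$ is then one of $\pm\tfrac14,\pm\tfrac34$, each of which is in lowest terms $\tfrac{\text{odd}}{4}$, so $\nu_2(\alpha_n)=2$; and an odd $n$ is never $\equiv0$ or $6\pmod{12}$, so $\nu_2(\alpha_n)=2=2+\nu_2(n)-0$ as asserted. The real content is the case of even $n$, which I would handle using Zagier's property (B) together with von Staudt--Clausen. For even $n$ put $\tilde B_{n}:=2nB_{n}^{*}-B_{n}$, which is Zagier's $\tilde B_{2m}$ with $m=n/2$, so that $2nB_{n}^{*}=\tilde B_{n}+B_{n}$ and
\begin{equation*}
\nu_2(\alpha_{n})=-\nu_2(B_{n}^{*})=\nu_2(2n)-\nu_2\!\left(\tilde B_{n}+B_{n}\right)=1+\nu_2(n)-\nu_2\!\left(\tilde B_{n}+B_{n}\right).
\end{equation*}
Von Staudt--Clausen gives $\nu_2(B_{n})=-1$ unconditionally (the term $\tfrac12$ is always present), while property (B), which now reads $\tilde B_{n}\equiv\sum_{(p+1)\mid n}\tfrac1p\pmod1$, shows that the term $\tfrac12$ (from $p=2$) is present exactly when $3\mid n$. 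Hence if $3\nmid n$ then $\tilde B_{n}\in\mathbb Z_{(2)}$ and $\nu_2(\tilde B_{n}+B_{n})=-1$ with no cancellation, giving $\nu_2(\alpha_{n})=2+\nu_2(n)$, i.e. $\delta=0$; if $3\mid n$ the half-integer parts of $\tilde B_{n}$ and $B_{n}$ cancel, $\tilde B_{n}+B_{n}$ becomes a $2$-adic integer, and its exact valuation must be pinned down.

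This last point is the crux, and I see two routes. The first is to strengthen property (B) to a congruence determining $\tilde B_{n}$ modulo $4$ rather than only modulo $1$ --- for instance by pushing Zagier's generating-function analysis (the one underlying the digamma-function formula of \cite{dixit-2014a}) one step further --- and to combine it with the classical Kummer-type congruences for $B_{n}$ modulo $4$ that refine von Staudt--Clausen (equivalently, with parity information on the Genocchi numbers $2(1-2^{n})B_{n}$); this should yield that $\tilde B_{n}+B_{n}$ is a $2$-adic unit when $n\equiv6\pmod{12}$ and has valuation exactly $1$ when $n\equiv0\pmod{12}$. The second route avoids property (B): in the defining sum isolate $r=0$ and $r=1$, discard the vanishing terms with odd $r\ge3$, and write $B_r=-\tfrac12+(B_r+\tfrac12)$ with $B_r+\tfrac12\in\mathbb Z_{(2)}$ by von Staudt--Clausen, obtaining
\begin{equation*}
B_{n}^{*}=\frac{3}{2n}-\frac n4-\frac12\,S_{n}+T_{n},\qquad S_{n}=\sum_{\substack{0\le r\le n\\ r\ \mathrm{even}}}\frac{1}{n+r}\binom{n+r}{2r},\quad T_{n}=\sum_{\substack{2\le r\le n\\ r\ \mathrm{even}}}\frac{B_r+\tfrac12}{n+r}\binom{n+r}{2r}.
\end{equation*}
One then estimates $\nu_2$ of each piece: $\nu_2\!\left(\tfrac{3}{2n}\right)=-1-\nu_2(n)$, $\nu_2\!\left(\tfrac n4\right)=\nu_2(n)-2$, and, via Kummer's theorem (each base-$2$ carry raising $\nu_2\binom{n+r}{2r}$), a lower bound for $\nu_2(T_n)$ together with the exact value of $\nu_2(S_n)$; the dependence on $n\bmod 3$ surfaces through the fixed term $\tfrac{3}{2n}$ and the fact that $S_n$ becomes $2$-adically larger precisely when $3\mid n$.

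In either route what remains is a careful but essentially routine case analysis on $n\bmod12$: in each residue class one must determine which of the competing terms realizes the minimal $2$-adic valuation and check that no further cancellation occurs, which separates out the three values $\delta\in\{0,1,2\}$. The step I expect to be the main obstacle is exactly the cancellation when $3\mid n$: both property (B) and the bare von Staudt--Clausen theorem control the relevant quantities only modulo $\mathbb Z$ and modulo $\tfrac12\mathbb Z$, so to see that $\tilde B_{n}+B_{n}$ --- equivalently the combination of $\tfrac{3}{2n}$, $\tfrac n4$, $\tfrac12 S_n$ and $T_n$ --- has $2$-adic valuation exactly $0$ for $n\equiv6\pmod{12}$ and exactly $1$ for $n\equiv0\pmod{12}$, one is forced to work modulo higher powers of $2$ and to bring in both refined Bernoulli congruences and (in the second route) the fine behaviour of the binomial coefficients $\binom{n+r}{2r}$ under Kummer's theorem.
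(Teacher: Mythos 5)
This theorem is not proved in the present paper at all: it is quoted from Part II of the series \cite{dixit-2014b}, so there is no in-house argument to compare yours against. Judged on its own terms, your proposal is a correct and complete argument for all residue classes except the two hard ones, and it is honest about where it stops. The odd case via property (A) is fine. For even $n$, writing $2nB_n^*=\tilde B_n+B_n$, using von Staudt--Clausen to get $\nu_2(B_n)=-1$, and using property (B) to get $\nu_2(\tilde B_n)\ge 0$ when $3$ does not divide $n$ (so that $\nu_2(\tilde B_n+B_n)=-1$ with no possible cancellation) correctly yields $-\nu_2(B_n^*)=2+\nu_2(n)$ in that case.

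The genuine gap is the case $6\mid n$, where you must show $\nu_2(\tilde B_n+B_n)=0$ for $n\equiv 6\pmod{12}$ and $=1$ exactly for $n\equiv 0\pmod{12}$, i.e.\ you must determine $\tilde B_n+B_n$ modulo $4$ as a $2$-adic integer. Property (B) controls $\tilde B_n$ only modulo $\mathbb{Z}$ and von Staudt--Clausen controls $B_n$ only up to an element of $\mathbb{Z}_{(2)}$, so together they give no more than $\tilde B_n+B_n\in\mathbb{Z}_{(2)}$; this is exactly what you observe, but neither of your proposed repairs is carried out. Route one assumes an unproved strengthening of (B) to a congruence modulo $4$, which is essentially equivalent in difficulty to the theorem itself. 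Route two reduces the claim to the exact value of $\nu_2(S_n)$ for $S_n=\sum_{r\ \mathrm{even}}\binom{n+r}{2r}/(n+r)$ together with a sufficiently strong lower bound on $\nu_2(T_n)$, and the decisive assertion --- that $S_n$ becomes $2$-adically larger precisely when $3\mid n$, by the right amount to produce $\delta=1$ versus $\delta=2$ according to $n\bmod 12$ --- is stated but not proved. A viable completion of route two would evaluate the constant-coefficient sums $\sum_r\binom{n+r}{2r}\frac{1}{n+r}$ in closed form (they are Chebyshev-type sums of the kind appearing elsewhere in this paper, e.g.\ in \eqref{lem10.2}) and then perform the modulo-$8$ bookkeeping on each piece; as written, the classes $n\equiv 0,6\pmod{12}$ remain unestablished.
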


\smallskip 
The result in (C) motivates the present paper. Zagier \cite{zagier-1998a} obtained \eqref{zag-asym}, analogous to the asymptotic formula \cite[p.~267]{apostol-1998a}
\begin{equation*}
B_{2n}\sim2(2\pi)^{-2n}(-1)^{n+1}(2n)!\hspace{10mm} (n\to\infty)
\end{equation*}
satisfied by the Bernoulli numbers.
The above asymptotic formula for Bernoulli numbers can be replaced by the following exact formula \cite[p.~266]{apostol-1998a}, \cite[p.~5, Equation (1.14)]{temme-1996a}
\begin{equation}\label{berexact}
B_{2n}=\frac{2(-1)^{n+1}(2n)!\zeta(2n)}{(2\pi)^{2n}},
\end{equation}
where $\zeta(s)$ is the Riemann zeta function. Zagier, in looking for a corresponding exact formula
 for $B_{2n}^{*}$, obtained the following beautiful result \cite{zagier-1998a}
\begin{eqnarray}
B_{2n}^{*}  & =  & -n+
\sum_{\ell=1}^{\infty} \left( (-1)^{n} \pi Y_{2n}(4 \pi \ell) + 
\frac{1}{2 \sqrt{\ell}} \right) - \frac{1}{2} \zeta \left( \frac{1}{2} \right) 
\label{zagier-sum} \\
& & + \sum_{\ell=1}^{\infty} \frac{1}{\sqrt{\ell(\ell+4)}} 
\left( \frac{\sqrt{\ell+4} - \sqrt{\ell}}{2} \right)^{4n}.
\nonumber
\end{eqnarray}
\no
The heuristic behind the discovery of this formula is interesting, and the interested reader is referred
to \cite{zagier-1998a}. The proof of \eqref{zagier-sum} is pretty and 
involves meticulous manipulations of the associated infinite series and integrals.

Note that the companion formula 
\begin{equation}\label{comp-1}
B_{2n+1}^{*} = \frac{1}{4}\left(\frac{-4}{2n+1}\right)+\frac{1}{2}\left(\frac{-3}{2n+1}\right),
\end{equation}
where $\left(\frac{a}{n}\right)$ is the Jacobi symbol, is elementary and has also been established
 in \cite{zagier-1998a}. It can also be rephrased in the form \cite[Corollary 10.6]{dixit-2014a}
\begin{equation}\label{comp-2}
B_{2n+1}^{*} = \frac{(-1)^{n}}{4} + \frac{1}{\sqrt{3}} 
\sin \left( \frac{(2n+1)\pi}{3} \right).
\end{equation}
\no
For $0\leq x\leq 1$ and $n\geq 1$, the Fourier expansion of the even-indexed Bernoulli polynomials is given by 
\cite[p. 5]{temme-1996a} 
\begin{equation}
B_{2n}(x) = 2(-1)^{n+1} (2n)! \sum_{m=1}^{\infty} \frac{\cos 2 \pi m x}
{(2 \pi m)^{2n}},
\label{ber-series1}
\end{equation}
which gives \eqref{berexact} as a special case when $x=0$. Similarly, for $0\leq x\leq 1$ when $n>0$, and for $0<x<1$ when $n=0$, the Fourier expansion of the odd-indexed Bernoulli polynomials is given by \cite[p. 5]{temme-1996a} \footnote{There is a typo in the version of this formula given in the book. The power of $-1$ there should be $n$ and not $n+1$.}
\begin{equation}
B_{2n+1}(x) = 2(-1)^{n+1} (2n+1)! \sum_{m=1}^{\infty} \frac{\sin 2 \pi m x}
{(2 \pi m)^{2n+1}}.
\label{ber-series2}
\end{equation}
The Fourier expansions in \eqref{ber-series1} and \eqref{ber-series2} now raise two natural questions: does there exist a generalization of \eqref{zagier-sum} 
for the Zagier polynomials $B_{2n}^{*}(x)$, and, is there an analogue of such a generalization for $B_{2n+1}^{*}(x)$?

The primary goal of this paper is to answer these two questions in the affirmative. The generalization of 
\eqref{zagier-sum} for the polynomial $B_{2n}^{*}(x)$ when $0 < x < 1$ is given first.

\begin{theorem}
\label{thm-main-0}
Let $0 < x < 1$ and $n \in \mathbb{N} $. Define
\begin{equation}\label{gynx}
g(y, r, x):=\frac{(y+1+x - \sqrt{(y-1+x)(y+3+x) } \,\, )^{2r}}{\sqrt{(y-1+x)(y+3+x)}}.
\end{equation}
Let $Y_{n}(z)$ be defined in \eqref{bessely-int}, and denote by $U_{n}(x)$ the Chebyshev polynomial of the
second kind. Then,
\begin{eqnarray*}
B_{2n}^{*}(x) &  = & (-1)^{n } \pi\sum_{m=1}^{\infty} Y_{2n}(4 \pi m) \cos(2 \pi m x) \\
& & + \frac{1}{4} \left( U_{2n-1} \left( \frac{x+1}{2} \right) + U_{2n-1} \left( \frac{x}{2} \right) +
U_{2n-1} \left( \frac{x-1}{2} \right) + U_{2n-1} \left( \frac{x-2}{2} \right)  \right) \\
& & + \frac{1}{2^{2n+1}} \left(\sum_{m=1}^{\infty} g(m, n, x)+\sum_{m=1}^{\infty}g(m, n, 1-x)\right) .
 \end{eqnarray*}
 \end{theorem}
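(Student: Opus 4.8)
The plan is to follow the strategy that works for Zagier's original formula \eqref{zagier-sum} but carry an extra parameter $x$ throughout. The starting point should be an integral representation for $B_{2n}^{*}(x)$. Recall that $B_{2n}^{*}(x)$ can be written via the generating function in terms of the digamma function (from \cite[Theorem 3.1]{dixit-2014a}); alternatively, using the defining sum $B_{n}^{*}(x)=\sum_{r}\binom{n+r}{2r}\frac{B_{r}(x)}{n+r}$ together with the Hurwitz-zeta / Fourier representation \eqref{ber-series1} of $B_{r}(x)$, one can try to swap the order of summation. My first step is therefore to insert \eqref{ber-series1} into the definition, interchange the finite $r$-sum with the infinite $m$-sum, and recognize the inner finite sum $\sum_{r=0}^{2n}\binom{2n+r}{2r}\frac{1}{2n+r}\,t^{2r}$ (with $t=1/(2\pi m)$, say) as a known generating function. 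This is exactly the combinatorial identity that produces Bessel functions in Zagier's treatment: the generating function $\sum_{r}\binom{n+r}{2r}\frac{z^{2r}}{n+r}$ is essentially a Gegenbauer/Bessel-type generating function, and its closed form is what converts $\cos 2\pi m x/(2\pi m)^{2r}$-type terms into $Y_{2n}(4\pi m)\cos(2\pi m x)$ plus elementary correction terms.

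The key technical input I expect to need is a ``Hadamard-type'' or contour-integral identity expressing $\pi Y_{2n}(4\pi m)$ as a finite sum plus a convergent series — this is the analogue of the resolvent/partial-fraction identity Zagier uses. Concretely, one has an integral formula of the form $(-1)^{n}\pi Y_{2n}(4\pi m)+\tfrac12\cdot(\text{elementary}) = \int_0^\infty(\text{something involving }e^{-mt}\text{ and Chebyshev-type kernels})\,dt$, and summing a geometric series in $m$ (which is where $\cos 2\pi m x = \tfrac12(e^{2\pi i m x}+e^{-2\pi i mx})$ enters and forces the shift $x\mapsto x\pm 1$, $x\mapsto 1-x$) produces both the Chebyshev-polynomial block $U_{2n-1}$ and the $g(m,n,x)$-series. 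The function $g(y,r,x)$ in \eqref{gynx} has the tell-tale shape $(\text{linear in }y + \text{const} - \sqrt{\text{quadratic in }y})^{2r}/\sqrt{\text{quadratic}}$, which is precisely the form one gets from a root of a quadratic $w^2 - 2(y+1+x)w + (\text{something}) = 0$ arising when resumming $\sum_m Y_{2n}(4\pi m)\,z^m$ against the generating function of $U_{2n-1}$; I would reverse-engineer the quadratic from \eqref{gynx} to identify the right kernel. The four Chebyshev terms $U_{2n-1}(\tfrac{x+1}{2}), U_{2n-1}(\tfrac{x}{2}), U_{2n-1}(\tfrac{x-1}{2}), U_{2n-1}(\tfrac{x-2}{2})$ should emerge as the ``boundary'' contributions (residues at the endpoints, or the $m=0$ / $\ell=1$ terms) from the two shifted sums, so I would track these four shifts as $x, x-1$ (from one exponential) and $-x, -x-1 \equiv$ reflected versions (from the other), using the parity $U_{2n-1}(-u) = -U_{2n-1}(u)$ to collect signs.

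The order of the steps I would carry out: (1) expand $B_{2n}^{*}(x)$ using \eqref{ber-series1} and interchange sums, justifying convergence for $0<x<1$; (2) evaluate the inner finite sum in closed form, obtaining an expression in the variable $1/(2\pi m)$ involving $\sqrt{\ }$ and $2n$-th powers of algebraic functions; (3) split $\cos 2\pi mx$ into exponentials and perform the $m$-summation, which converts the algebraic-power terms into the $g$-series and throws off residue/boundary terms; (4) identify the residue terms with the Chebyshev block via the explicit formula $U_{2n-1}(u) = \frac{(u+\sqrt{u^2-1})^{2n} - (u-\sqrt{u^2-1})^{2n}}{2\sqrt{u^2-1}}$; (5) match the leftover series against $(-1)^n\pi\sum_m Y_{2n}(4\pi m)\cos 2\pi m x$ using the same integral representation of $Y_{2n}$ that Zagier employs, most likely $Y_{2n}(4\pi m) = \frac{(-1)^n}{\pi}\int$ (a Schläfli- or Mehler–Sonine-type integral), and check that the ``$+\tfrac{1}{2\sqrt\ell}$, $-\tfrac12\zeta(1/2)$'' corrections present at $x=0$ reorganize — for $0<x<1$ the $\zeta(1/2)$ and $1/(2\sqrt\ell)$ terms should be absorbed/cancelled because $\sum_m m^{-1/2}\cos 2\pi m x$ converges (by Dirichlet's test) without needing a regularization, which explains their disappearance from the $0<x<1$ statement; (6) finally, take $x\to 0^+$ and verify one recovers \eqref{zagier-sum}, using that $U_{2n-1}(1/2) $-type special values and the reflection formula collapse the Chebyshev block to $-n$ and the two $g$-series merge into the single $\sum 1/\sqrt{\ell(\ell+4)}(\cdots)^{4n}$ term of Zagier.

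The main obstacle I anticipate is step (3)–(5): justifying the interchange of the $m$-summation with the algebraic manipulations and correctly bookkeeping the boundary/residue terms, since the series $\sum_m Y_{2n}(4\pi m)\cos 2\pi m x$ is only conditionally convergent (the Bessel function $Y_{2n}(4\pi m)$ decays like $m^{-1/2}$), so one cannot split it term-by-term cavalierly; one must group terms or use Abel summation, and the ``diffraction-theory'' series alluded to in the abstract is presumably exactly this delicate conditionally-convergent object. A secondary difficulty is producing the precise integral representation of $Y_{2n}(4\pi m)$ with a kernel whose $m$-resummation yields $g(m,n,x)$; getting the algebra of the quadratic and its two roots to line up with the exact form \eqref{gynx} (in particular the shifts $y-1+x$, $y+3+x$) will require care, and the paper likely isolates this as a separate lemma — indeed the abstract mentions ``the derivative of the Bessel function of the first kind with respect to its order as the Fourier coefficient of a function involving Chebyshev polynomials,'' which is probably the auxiliary identity doing this matching.
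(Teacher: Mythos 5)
Your very first step contains the gap that sinks the plan. You propose to insert the Fourier expansion \eqref{ber-series1} into the defining sum $B_{n}^{*}(x)=\sum_{r}\binom{n+r}{2r}\frac{B_{r}(x)}{n+r}$, but that sum runs over \emph{all} $r$, and \eqref{ber-series1} only applies to the even-indexed Bernoulli polynomials; the odd-indexed $B_{2r+1}(x)$ do not vanish for general $x$ (unlike the odd Bernoulli numbers at $x=0$ that make Zagier's original argument work) and carry sine expansions \eqref{ber-series2}. The paper explicitly records that pushing the definition through in this way leads to two terminating ${}_5F_2$ hypergeometric sums that do not produce anything resembling the stated formula. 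The missing idea is the identity $2 B_{2n}^{*}(x) = \sum_{r=0}^{n} (-1)^{n+r} \binom{n+r}{2r} \frac{B_{2r}(x)}{n+r} + U_{2n-1}\left(\frac{x}{2}\right) + U_{2n-1}\left(\frac{x+1}{2}\right)$ (from Part II of this series), which re-expresses $B_{2n}^{*}(x)$ using \emph{only} even-indexed Bernoulli polynomials; only then does \eqref{ber-series1} apply, the inner $r$-sum collapse to a Schl\"afli polynomial $S_{2n}(4\pi m)$, and Watson's decomposition $S_{2n}(z)=-\pi Y_{2n}(z)+2(\gamma+\log(z/2))J_{2n}(z)+P_{2n}(z)-2Q_{2n}(z)$ produce the Bessel-$Y$ series plus correction terms. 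Without this input your step (2) has no closed form to evaluate.

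Your steps (3)--(5) are also not how the correction terms are handled, and the route you sketch runs into a difficulty the authors themselves could not resolve. The paper does \emph{not} resum $\sum_m$ against an integral representation of $Y_{2n}(4\pi m)$ with a geometric-series kernel; instead it identifies $P_{2n}(4\pi m)$ and $\left.\partial_\nu J_\nu(4\pi m)\right|_{\nu=2n}$ as the Fourier \emph{coefficients} of explicitly constructed $1$-periodic functions built from $\cos^{-1}$, $\sin^{-1}$, $U_{2n-1}$ and the $g$-series (Lemmas \ref{lemma-two4} and \ref{lemma-two5}), the latter requiring a new evaluation of $\int_0^\infty e^{-2n\phi}\cos(u\cosh\phi)\,d\phi$ via a second-order ODE and uniqueness of solutions. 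The Poisson-summation/resummation strategy you describe is essentially the ``alternative approach'' of Section 7 of the paper, where the needed interchanges of the order of summation for these conditionally convergent double series are stated as open problems (Problems 1 and 2). So even granting the combinatorial identity, your plan would stall exactly at the interchange you flag as the ``main obstacle,'' whereas the paper's Fourier-coefficient device circumvents it.
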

 
 

The analogue of Theorem \ref{thm-main-0} for $B_{2n+1}^{*}(x)$ is given next.
\begin{theorem}
\label{thm-main-1}
Let $0 < x < 1$ and $n \in \mathbb{N} $.  Let $g(y, r, x)$, $Y_n(z)$ and $U_n(x)$ be defined as before. Then,
\begin{eqnarray*}
B_{2n+1}^{*}(x) &  = & (-1)^{n } \pi\sum_{m=1}^{\infty} Y_{2n+1}(4 \pi m) \sin(2 \pi m x) \\
& & + \frac{1}{4} \left( U_{2n} \left( \frac{x+1}{2} \right) + U_{2n} \left( \frac{x}{2} \right) +
U_{2n} \left( \frac{x-1}{2} \right) + U_{2n} \left( \frac{x-2}{2} \right)  \right) \\
& & + \frac{1}{2^{2n+2}} \left(\sum_{m=1}^{\infty} g\left(m, n+\tfrac{1}{2},x\right)-\sum_{m=1}^{\infty}g\left(m, n+\tfrac{1}{2},1-x\right)\right).
 \end{eqnarray*}
 \end{theorem}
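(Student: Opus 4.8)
The plan is to follow the architecture of Zagier's proof of \eqref{zagier-sum} and of Theorem~\ref{thm-main-0}, adapted to the odd index; the statement of Theorem~\ref{thm-main-1} is so close to that of Theorem~\ref{thm-main-0} that ``mutatis mutandis'' is the operative phrase, though several sign and parity points deserve care. The three summands on the right correspond to a natural threefold split: an oscillatory ``$\zeta$-type'' part producing the Bessel series $(-1)^n\pi\sum_{m\ge 1}Y_{2n+1}(4\pi m)\sin(2\pi m x)$, a finite ``polynomial'' part producing the four Chebyshev values, and an algebraic ``tail'' producing the $g$-series from \eqref{gynx}. I would start either from the explicit generating-function identity $\sum_{n\ge1}B_n^{*}(x)z^n=F(z,x)$ of \cite[Theorem~3.1]{dixit-2014a}, with $F$ built from the digamma function $\psi$, and extract $B_{2n+1}^{*}(x)=\frac{1}{2\pi i}\oint_{|z|=\varrho}F(z,x)z^{-2n-2}\,dz$; or from the defining sum $B_{2n+1}^{*}(x)=\sum_{r}\binom{2n+1+r}{2r}\frac{B_r(x)}{2n+1+r}$ with the Fourier expansions \eqref{ber-series1}--\eqref{ber-series2} inserted and the resulting double sum rearranged. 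Either way the odd index produces a kernel odd under $x\mapsto 1-x$, which is exactly what turns the ``$+$'' between the two $g$-sums in Theorem~\ref{thm-main-0} into the ``$-$'' here; the reflection relation \eqref{refsym} for Zagier polynomials is the tidy way to make this precise and to cross-check against Theorem~\ref{thm-main-0}.

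The key steps, in order: (i) split $F$ using two classical representations of $\psi$ --- its partial-fraction/reflection form and its Binet integral form. (ii) From the partial-fraction part, the finitely many poles whose relevant argument lands in $[-1,1]$ contribute honest Chebyshev values via $U_k(\cos\theta)=\sin((k+1)\theta)/\sin\theta$ and $\sum_{k\ge0}U_k(w)t^k=(1-2wt+t^2)^{-1}$; the four quadratic factors that appear evaluate $U_{2n}$ at $\tfrac{x+1}{2},\tfrac{x}{2},\tfrac{x-1}{2},\tfrac{x-2}{2}$, giving the middle line. (iii) The poles whose argument exceeds $1$ in modulus --- infinitely many, indexed by $m\ge1$ --- give the tail: after the substitution $m+1+x=2\cosh\phi$ one checks $m+1+x-\sqrt{(m-1+x)(m+3+x)}=2e^{-\phi}$ and $\sqrt{(m-1+x)(m+3+x)}=2\sinh\phi$, so that $2^{-(2n+2)}g(m,n+\tfrac12,x)=e^{-(2n+1)\phi}/(4\sinh\phi)$, and summing over $m$ and over the arguments $x,1-x$ with opposite signs yields the last line, exactly as the last line of \eqref{zagier-sum} arises when $x=0$. (iv) The Binet integral part, with $\frac{1}{e^{2\pi t}-1}=\sum_{m\ge1}e^{-2\pi mt}$, becomes a sum over $m$ of Laplace-type integrals that evaluate, through the known integral representations of $J_\nu$ and the definition \eqref{bessely-int} of $Y_\nu$ (whose $\nu\to 2n+1$ limit is a L'H\^opital expression in $\partial_\nu J_{\pm\nu}(4\pi m)$), to $(-1)^n\pi\sum_m Y_{2n+1}(4\pi m)\sin(2\pi m x)$ --- this is where the auxiliary identity expressing $\partial_\nu J_\nu$ as a Chebyshev-type Fourier coefficient enters. (v) Collect the three pieces and tidy the constants.

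The main obstacle will be the analytic bookkeeping. Unlike the Bernoulli series \eqref{ber-series2}, whose terms decay like $m^{-2n-1}$, the terms of $\sum_m Y_{2n+1}(4\pi m)\sin(2\pi m x)$ are of size $m^{-1/2}$ --- in fact $(-1)^n\pi Y_{2n+1}(4\pi m)\sim -\tfrac{1}{2\sqrt m}$, since $4\pi m$ is a full multiple of $2\pi$ --- so the series converges only conditionally on $0<x<1$, and the interchange of summation with integration, the vanishing of the arcs at infinity, and the recombination of the three parts must each be justified with care (for instance by Abel summation, or by inserting a convergence factor and removing it at the end). The second delicate point is pinning down the exact finite part: one must show the residues at the algebraic singularities collapse to precisely those four Chebyshev terms with coefficient $\tfrac14$, with no stray additive constant (the analogue of Zagier's ``$-n$''). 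Finally, as the abstract notes, letting $x\to 0^{+}$ should recover the $6$-periodicity \eqref{comp-2}; but that limit is singular --- the Bessel series and the first $g$-sum each blow up like $x^{-1/2}$, and the first $g$-sum additionally develops a $1/\sqrt{(m-1+x)(m+3+x)}$ singularity at $m=1$ --- so that deduction needs its own limiting argument. The cleanest organization is therefore to establish the identity uniformly on compact subsets of $(0,1)$ and only afterwards pass to the boundary.
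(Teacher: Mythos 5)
Your roadmap has the right global shape (a three-way split into a Bessel-$Y$ series, four Chebyshev values, and the algebraic $g$-tails; the hyperbolic substitution $m+1+x=2\cosh\phi$ giving $2^{-(2n+2)}g(m,n+\tfrac12,x)=e^{-(2n+1)\phi}/(4\sinh\phi)$ is exactly the change of variables used in the paper, and your warnings about conditional convergence and the singular $x\to0$ limit match the care taken in Sections \ref{sec-recovery} and \ref{sec-periodicity}). But there are two genuine gaps. First, your proposed starting points are problematic: the paper explicitly observes at the start of Section \ref{sec-proof-main} that inserting \eqref{ber-series1}--\eqref{ber-series2} into the defining sum $\sum_r\binom{2n+1+r}{2r}\frac{B_r(x)}{2n+1+r}$ does \emph{not} work, because both parities of Bernoulli polynomials occur and one lands on intractable terminating ${}_5F_2$'s. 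The indispensable first move is the identity $2B_{2n}^{*}(x)=\sum_{r=0}^{n}(-1)^{n+r}\binom{n+r}{2r}\frac{B_{2r}(x)}{n+r}+U_{2n-1}(\tfrac{x}{2})+U_{2n-1}(\tfrac{x+1}{2})$ from \cite[Theorem 10.1]{dixit-2014b} (and its odd-index analogue), which kills one parity, lets \eqref{ber-series2} be applied uniformly, and identifies the resulting coefficient of $\sin(2\pi mx)$ as the Schl\"afli polynomial $S_{2n+1}(4\pi m)$ as in \eqref{form-A1}. Without this, your ``double sum rearranged'' does not get off the ground; your alternative contour-integral route through the digamma generating function is only asserted, not carried out.

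Second, the heart of the proof is the conversion of $\sum_m S_{2n+1}(4\pi m)\sin(2\pi mx)$ into the three displayed pieces, and your step (iv) compresses this into ``Laplace-type integrals that evaluate \dots to $(-1)^n\pi\sum_m Y_{2n+1}(4\pi m)\sin(2\pi mx)$.'' In the actual argument this requires Watson's decomposition \eqref{form-S1}, $S_n(z)=-\pi Y_n(z)+2(\gamma+\log(z/2))J_n(z)+P_n(z)-2Q_n(z)$, followed by the two Fourier-coefficient lemmas (the analogues of Lemmas \ref{lemma-two4} and \ref{lemma-two5}): one must recognize $P_{2n+1}(4\pi m)$ and $\partial_\nu J_\nu(4\pi m)|_{\nu=2n+1}$ as Fourier coefficients of explicit functions built from $\arccos$, $\arcsin$, $U_{2n}$ and the $g$-sums, and the second of these rests on the closed-form evaluation \eqref{integral-id} of $\int_0^\infty e^{-2n\phi}\cos(u\cosh\phi)\,d\phi$ (respectively its odd-order analogue), proved via Coates's differential equation \eqref{de} together with a uniqueness argument at the regular singular point. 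None of this mechanism appears in your sketch, and it is precisely where the four Chebyshev values with coefficient $\tfrac14$ and the clean cancellation of all logarithmic and $\psi$-terms come from; asserting the outcome does not substitute for it. A smaller point: the sign flip between the two $g$-sums is a consequence of the $x\mapsto1-x$ antisymmetry of $\sin(2\pi mx)$ and of the odd-order kernel, and the consistency check via \eqref{refsym} needs to be combined with the shift formula \eqref{lem10.2} (since \eqref{refsym} reflects about $x=-\tfrac32$, not $x=\tfrac12$).
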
%
\textbf{Remark 1.} The fact that the Bessel function series appearing in Theorems \ref{thm-main-0} and \ref{thm-main-1} 
converge conditionally can be proved as follows. The asymptotic expansion of $Y_{\nu}(z)$  as $|z|\to\infty$ (see \eqref{asymbess1} below) implies that 
as $m\to\infty$,
\begin{equation}
(-1)^n\pi Y_{2n}(4\pi m)\sim-\frac{1}{2\sqrt{m}}.\label{Y-diverge}
\end{equation}
Now it is well-known \cite[p.~257]{apostol-1998a} that the series $\sum_{m=1}^{\infty} e^{2\pi imx}m^{-s}$ converges conditionally for Re$(s)>0$, 
which means that each of the series $\sum_{m=1}^{\infty}\frac{\cos(2\pi mx)}{\sqrt{m}}$ and $\sum_{m=1}^{\infty}\frac{\sin(2\pi mx)}{\sqrt{m}}$ 
also converges conditionally, and hence the series in Theorems \ref{thm-main-0} and \ref{thm-main-1} as well.

\textbf{Remark 2.} Theorems \ref{thm-main-0} and \ref{thm-main-1} combined with the result \cite[Lemma 10.2]{dixit-2014a}
\begin{equation*}
B_{n}^{*}(x+1) = B_{n}^{*}(x) + \frac{1}{2}U_{n-1} \left( \frac{x}{2} + 1
\right),
\end{equation*}
or more generally with
\begin{equation}\label{lem10.2}
B_{n}^{*}(x+k) = B_{n}^{*}(x) + \frac{1}{2} \sum_{j=1}^{k} 
U_{n-1} \left( \frac{x+j-1}{2} + 1 \right)\hspace{7mm} (k\in\mathbb{Z}),
\end{equation}
give exact formulas for $B_{n}^{*}(x)$ for any non-integer real values of $x$.

In a completely different context of diffraction theory, V.~Twersky \cite[Equations (40), (41)]{twersky-1961c} (see also C.~M.~Linton \cite{linton-2006b}, \cite[Equations (47), (49)]{linton-2006a}) has obtained one of the intermediate results in the proof of our Theorem \ref{thm-main-0}, namely \eqref{finalaxn} (and likewise the corresponding equation occuring in the proof of our Theorem \ref{thm-main-1}), albeit these are expressed in forms much different than ours,  especially since they are phrased using the terminology of diffraction grating. However, our proofs of these intermediate results are new and completely different from his. Our proofs also give new and important results along the way, for e.g., Lemmas \ref{lemma-two4} and \ref{lemma-two5}. 

As hinted above, infinite series involving Bessel functions and trigonometric functions are often encountered in studies on the theory of diffraction 
\cite{asatryan}, \cite{hargreaves-1918a}, \cite{jackson-1904a}, \cite[Appendix D]{macdonald-2013a}, \cite{mcphedran-1983a}. Indeed, the series in the above theorems have arisen \cite{twersky-1956a}, \cite{twersky-1959b} 
in the analysis of the scattering of a plane wave on a diffraction grating with an arbitrary angle of incidence. The Schl\"omilch or Schl\"omilch type series, such as the one in Theorems \ref{thm-main-0} and \ref{thm-main-1}, or the ones studied in \cite{asatryan}, are conditionally convergent and converge extremely slowly. Their partial sums are highly oscillatory. Thus the advantage of alternate representations for them, such as the ones in Theorems \ref{thm-main-0} and \ref{thm-main-1}, is that they are useful for their fast computation.

W. v.~Ignatowsky \cite[Section 6]{ignatowsky-1915a} studied the series $\sum_{m=1}^{\infty}Y_{2n}(mD)$, where $n\in\mathbb{N}, D>0$ and $D$ is not an integral multiple of $2\pi$. His case obviously does not cover the series in Zagier's formula \eqref{zagier-sum}. However, we show that Zagier's formula can be obtained from Theorem \ref{thm-main-0} in the limiting case $x\to 1$, the proof of which is interesting in itself. Also, it is shown that the limiting case $x\to 0$ of Theorem \ref{thm-main-1} gives the curious $6$-periodicity of $B_{2n+1}^{*}$ mentioned in property (A), and which is equivalent to \eqref{comp-1} and \eqref{comp-2}. 

K.~Dilcher \cite[Corollary 1]{dilcher-1987a} showed that the sequence of Bernoulli polynomials 
converges uniformly on compact subsets of $\mathbb{C}$ to the sine or cosine functions. 
This implies, in particular, the asymptotic formulas
\begin{align*}
B_{2n}(x)&\sim 2(2\pi)^{-2n}(2n)!(-1)^{n-1}\cos(2\pi x),\\
B_{2n+1}(x)&\sim 2(2\pi)^{-(2n+1)}(2n+1)!(-1)^{n-1}\sin(2\pi x),
\end{align*}
for real $x$ as $n\to\infty$.

As an application of our Theorems \ref{thm-main-0} and \ref{thm-main-1}, the following asymptotic relations for 
Zagier polynomials are obtained here.
\begin{corollary}\label{zagier-asymp}
Let $0<x<1$. For $x\neq\frac{1}{4},\frac{3}{4}$, as $n \to \infty$,
\begin{align}\label{zagier-even-asymp}
 B_{2n}^{*}(x) &\sim (-1)^{n} \pi Y_{2n}(4 \pi) \cos( 2 \pi x),
 \end{align} 
and
\begin{align*}
 B_{2n}^{*}\left(\frac{1}{4}\right) &\sim (-1)^{n+1} \pi Y_{2n}(8 \pi)\sim B_{2n}^{*}\left(\frac{3}{4}\right).
 \end{align*} 
Also for $x\neq\frac{1}{2}$, as $n\to\infty$,
\begin{align*}
 B_{2n+1}^{*}(x) &\sim (-1)^{n} \pi Y_{2n+1}(4 \pi) \sin( 2 \pi x).
 \end{align*}
\end{corollary}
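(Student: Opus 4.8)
The strategy is to read each asymptotic relation off the corresponding exact formula — Theorem~\ref{thm-main-0} for the even-index case and Theorem~\ref{thm-main-1} for the odd-index case — by isolating the leading term of the Schl\"omilch-type Bessel series and showing that the other two blocks are negligible. The quantitative input is the large-order behaviour $Y_{\nu}(z)\sim-\pi^{-1}\Gamma(\nu)(2/z)^{\nu}$ as $\nu\to\infty$ with $z$ fixed (immediate from \eqref{bessely-int}--\eqref{besselj-int}), which gives
\[
(-1)^{n}\pi Y_{2n}(4\pi m)\sim(-1)^{n+1}\frac{(2n-1)!}{(2\pi m)^{2n}},\qquad (-1)^{n}\pi Y_{2n+1}(4\pi m)\sim(-1)^{n+1}\frac{(2n)!}{(2\pi m)^{2n+1}},
\]
and in particular shows that $|Y_{2n}(4\pi)|$, $|Y_{2n}(8\pi)|$ and $|Y_{2n+1}(4\pi)|$ all grow faster than every power of $n$. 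This disposes of the non-Bessel parts at once: for $0<x<1$ the arguments $\tfrac{x+1}{2},\tfrac{x}{2},\tfrac{x-1}{2},\tfrac{x-2}{2}$ all lie in the open interval $(-1,1)$, so the Chebyshev blocks in both theorems are bounded uniformly in $n$; and since $\tfrac12(\sqrt{m+3+x}-\sqrt{m-1+x})<1$ for every $m\ge1$ when $0<x<1$, the factor $\bigl(\tfrac12(\sqrt{m+3+x}-\sqrt{m-1+x})\bigr)^{4n}$ forces $\tfrac{1}{2^{2n+1}}\sum_{m}g(m,n,x)$ and its odd-index analogue to be exponentially small in $n$. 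Both blocks are therefore $o\bigl(Y_{2n}(4\pi)\bigr)$ (resp.\ $o\bigl(Y_{2n+1}(4\pi)\bigr)$) and contribute nothing to the leading order.

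It remains to show that the $m=1$ term controls $\sum_{m\ge1}Y_{2n}(4\pi m)\cos(2\pi mx)$ when $\cos(2\pi x)\neq0$ (i.e.\ $x\neq\tfrac14,\tfrac34$), and likewise for $\sum_{m\ge1}Y_{2n+1}(4\pi m)\sin(2\pi mx)$ when $\sin(2\pi x)\neq0$ (i.e.\ $x\neq\tfrac12$). When $x=\tfrac14$ or $\tfrac34$ the odd values of $m$ drop out of the cosine series, the $m=2$ term becomes dominant, and one gets $-Y_{2n}(8\pi)\bigl(1+o(1)\bigr)$, which is the source of the factor $(-1)^{n+1}\pi Y_{2n}(8\pi)$ and also shows $B_{2n}^{*}(\tfrac14)\sim B_{2n}^{*}(\tfrac34)$; when $x=\tfrac12$ the sine series and the $g$-difference vanish identically, which is why that value is excluded. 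To prove the claim I split the sum over $m$ into three ranges. \emph{(i) Transient range} $2\le m\lesssim n$: here $4\pi m$ stays below the order $2n$, and a Debye-type majorant $|Y_{2n}(4\pi m)|\le C n^{-1/2}\bigl(n/(\pi e m)\bigr)^{2n}$, which is decreasing in $m$ and sharp at $m=1$, shows this part is at most $O(n)$ times its $m=2$ value, hence smaller than the $m=1$ term by a factor $O(n\,2^{-2n})$. \emph{(ii) Turning-point range and beyond} $m\asymp n$ up to $m\le n^{5}$: here $4\pi m\ge 2n$ and the uniform bound $|Y_{2n}(z)|=O(n^{-1/3})$ for $z\ge2n$ contributes only polynomially in $n$. \emph{(iii) Far tail} $m>n^{5}$: here the Hankel expansion gives $Y_{2n}(4\pi m)=-\tfrac{(-1)^{n}}{2\pi\sqrt m}+O(n^{2}m^{-3/2})$, and Abel summation (Dirichlet's test, using that the partial sums of $\cos(2\pi mx)$ and $\sin(2\pi mx)$ are bounded by $1/|\sin\pi x|$ since $x\notin\mathbb Z$) bounds this part by $o(1)$. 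Ranges (ii) and (iii) are at most polynomial in $n$, hence negligible against the leading term, which grows faster than any power of $n$; range (i) is genuinely smaller by the geometric factor. Feeding these estimates back into Theorems~\ref{thm-main-0} and~\ref{thm-main-1} yields all the assertions of the Corollary, the cases $x=\tfrac14,\tfrac34$ being handled verbatim with $m=1$ replaced by $m=2$ and $(2\pi,4\pi)$ by $(4\pi,8\pi)$.

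The main obstacle is the uniform-in-$n$ handling of range (iii): for fixed $n$ the Bessel series is only conditionally convergent (Remark~1 gives $(-1)^{n}\pi Y_{2n}(4\pi m)\sim-\tfrac1{2\sqrt m}$), so one cannot estimate the tail by absolute values and must exploit the cancellation produced by the oscillatory factor $\cos(2\pi mx)$ or $\sin(2\pi mx)$ — which is exactly why the abscissas $x=\tfrac14,\tfrac34$ (even case) and $x=\tfrac12$ (odd case) have to be singled out, and also why the argument is in fact simpler here than in Zagier's $x\to1$ situation, where no such oscillation is available. A secondary technical point is dovetailing the transient estimate in (i) with the turning-point estimate in (ii); both follow from standard uniform (Debye/Airy-type) asymptotics for Bessel functions of large order, but they must be combined so that the total remainder is genuinely of smaller order than the $m=1$ (or $m=2$) term. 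Everything else is bookkeeping with the explicit values of $\cos(2\pi mx)$ and $\sin(2\pi mx)$ at $x=\tfrac14,\tfrac34,\tfrac12$.
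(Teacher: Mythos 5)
Your proposal is correct in substance and reaches every assertion of the Corollary, but it handles the one genuinely delicate step --- showing that the tail $\sum_{m\ge 2}Y_{2n}(4\pi m)\cos(2\pi m x)$ is negligible against the $m=1$ term --- by a route quite different from the paper's. The paper first rewrites the series via \eqref{modification}, kills the piece $\sum_{m}\cos(2\pi mx)/(2\sqrt m)$ using its conditional convergence together with \eqref{yorder-asymp}, and then applies dominated convergence to the corrected series; the dominating bound comes from the monotonicity of $x\bigl[J_\nu^2(x)+Y_\nu^2(x)\bigr]$, which gives $|Y_{2n}(4\pi m)/Y_{2n}(4\pi)|\le\sqrt{c+1}/\sqrt m$ uniformly in $n$ once one knows $Y_{2n}(4\pi)\neq 0$ for \emph{every} $n$ (Lemma \ref{Y-nonzero}, proved via a zero-location inequality plus four numerical checks). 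You instead cut the sum into a sub-turning-point range, a turning-point range and a far oscillatory tail, invoke Debye, Airy-type and Hankel asymptotics respectively, and finish the far tail with Dirichlet's test. Your route completely avoids Lemma \ref{Y-nonzero} and is more quantitative about where the mass of the tail sits (the paper's majorant $C/\sqrt m$ is not itself summable, so your explicit range analysis is arguably the more airtight of the two); the price is the heavier uniform large-order machinery. Your dispatch of the Chebyshev and $g$-blocks is simpler than the paper's (which evaluates the limit of $2^{-2n-1}\sum_m g(m,n,x)$ against the leading term explicitly) and is correct, since for fixed $x\in(0,1)$ the Chebyshev arguments stay strictly inside $(-1,1)$ and the base of the $g$-block is strictly less than $1$. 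One technical slip to repair in range (i): the pointwise majorant $|Y_{2n}(4\pi m)|\le Cn^{-1/2}\bigl(n/(\pi e m)\bigr)^{2n}$ fails once $4\pi m$ becomes comparable to $2n$, where the fixed-argument Debye exponential \emph{underestimates} $|Y_{2n}|$. The conclusion survives, however, because $Y_{2n}$ is negative and increasing on $(0,y_{2n,1})$ with $y_{2n,1}>2n$, so $|Y_{2n}(4\pi m)|\le|Y_{2n}(8\pi)|\sim 4^{-n}|Y_{2n}(4\pi)|$ throughout $2\le m\le n/(2\pi)$, which is all that range (i) needs.
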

Lastly, we obtain a Zagier-type formula linking an infinite series involving $Y_{2n}(8\pi m)$ with $B_{2n}^{*}\left(-\frac{3}{2}\right)$ and $B_{2n}^{*}$. This formula is similar in flavor to Zagier's formula \eqref{zagier-sum}. The genesis of this formula is explained in Section \ref{op}.
\begin{theorem}\label{zagier-type-thm}
Let $Y_{n}(z)$, $U_n(x)$ be defined as before. The identity
 {\allowdisplaybreaks\begin{align*}
B_{2n}^{*}\left(-\frac{3}{2}\right)+B_{2n}^{*}&=2\sum_{m=1}^{\infty}\left((-1)^{n}\pi Y_{2n}(8\pi m)+\frac{1}{2\sqrt{2m}}\right)\nonumber\\
&\quad-n-\frac{1}{2}\left(U_{2n-1}\left(\frac{1}{4}\right)+U_{2n-1}\left(\frac{3}{4}\right)\right)-\frac{1}{\sqrt{2}}\zeta\left(\frac{1}{2}\right)\nonumber\\
&\quad+\frac{1}{2^{4n-1}}\sum_{m=1}^{\infty}\frac{\left(m+4-\sqrt{m(m+8)}\right)^{2n}}{\sqrt{m(m+8)}}
\end{align*}}
is true for all $n\in\mathbb{N}$.
\end{theorem}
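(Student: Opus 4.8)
The plan is to assemble the identity from three ingredients: Zagier's formula \eqref{zagier-sum} for $B_{2n}^{*}$, Theorem \ref{thm-main-0} specialized to the single point $x=\tfrac12$, and the translation formula \eqref{lem10.2}. First I would apply \eqref{lem10.2} with $x=-\tfrac32$ and $k=2$, that is, write $B_{2n}^{*}(\tfrac12)=B_{2n}^{*}(-\tfrac32+2)$ and transpose, to obtain
\[
B_{2n}^{*}\!\left(-\tfrac32\right)=B_{2n}^{*}\!\left(\tfrac12\right)-\tfrac12\left(U_{2n-1}\!\left(\tfrac14\right)+U_{2n-1}\!\left(\tfrac34\right)\right),
\]
which already produces the Chebyshev contribution in the statement. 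Next I would specialize Theorem \ref{thm-main-0} to $x=\tfrac12$. Here three simplifications occur: $\cos(2\pi m\cdot\tfrac12)=(-1)^{m}$; since $1-x=x$ the two sums $\sum g(m,n,x)$ and $\sum g(m,n,1-x)$ coincide and merge into $\tfrac{1}{2^{2n}}\sum_{m\ge1}g(m,n,\tfrac12)$; and the four Chebyshev terms cancel in pairs because $U_{2n-1}$ is an odd polynomial (so $U_{2n-1}(-t)=-U_{2n-1}(t)$). This yields
\[
B_{2n}^{*}\!\left(\tfrac12\right)=(-1)^{n}\pi\sum_{m=1}^{\infty}(-1)^{m}Y_{2n}(4\pi m)+\frac{1}{2^{2n}}\sum_{m=1}^{\infty}g\!\left(m,n,\tfrac12\right).
\]

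Adding this formula to Zagier's \eqref{zagier-sum} and using $1+(-1)^{m}=2$ for even $m$ and $0$ for odd $m$, the two Bessel series formally combine into $2(-1)^{n}\pi\sum_{k\ge1}Y_{2n}(8\pi k)$. By \eqref{Y-diverge} each of them is only conditionally convergent with tail $\sim-\tfrac{1}{2\sqrt m}$, so I would perform this regrouping at the level of partial sums. Since both series converge along every $N$, I may pass to $N=2K$, where the combined ``Bessel plus $\tfrac{1}{2\sqrt m}$ counterterm'' truncation equals $2(-1)^{n}\pi\sum_{k=1}^{K}Y_{2n}(8\pi k)+\sum_{\ell=1}^{2K}\tfrac{1}{2\sqrt\ell}$. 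Invoking the elementary expansion $\sum_{\ell=1}^{M}\ell^{-1/2}=2\sqrt M+\zeta(\tfrac12)+o(1)$ one checks that the $O(\sqrt K)$ terms cancel and that the surviving constants, together with the $-\tfrac12\zeta(\tfrac12)$ already present in \eqref{zagier-sum}, assemble into $2\sum_{m\ge1}\bigl((-1)^{n}\pi Y_{2n}(8\pi m)+\tfrac{1}{2\sqrt{2m}}\bigr)-\tfrac{1}{\sqrt2}\zeta(\tfrac12)$, i.e. the Bessel/zeta block of the theorem. The constant $-n$ is inherited verbatim from \eqref{zagier-sum}.

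It remains to merge the two algebraic series. Squaring the radical shows that the last series of \eqref{zagier-sum} equals $\tfrac{1}{2^{2n}}\sum_{\ell\ge1}\frac{(\ell+2-\sqrt{\ell(\ell+4)})^{2n}}{\sqrt{\ell(\ell+4)}}$, which is $\tfrac{1}{2^{2n}}\sum_{m\ge2}g(m,n,0)$; the substitution $L=2\ell$ converts it into $\tfrac{1}{2^{4n-1}}$ times the sum of $\frac{(L+4-\sqrt{L(L+8)})^{2n}}{\sqrt{L(L+8)}}$ over even $L\ge2$. On the other side, in $g(m,n,\tfrac12)$ the substitution $\ell=2m-1$ and the factorizations $(m-\tfrac12)(m+\tfrac72)=\tfrac14\ell(\ell+8)$, $m+\tfrac32=\tfrac12(\ell+4)$ give $\tfrac{1}{2^{2n}}\sum_{m\ge1}g(m,n,\tfrac12)=\tfrac{1}{2^{4n-1}}$ times the sum of the same expression over odd $\ell\ge1$. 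Since every positive integer is either even $(\ge2)$ or odd, adding the two gives exactly the last term of the theorem. Collecting the contributions from all three paragraphs completes the proof.

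The main obstacle is the convergence bookkeeping in the Bessel step: these are precisely the conditionally and slowly convergent Schl\"omilch-type series mentioned in the introduction, so one must verify carefully that the divergent $O(\sqrt K)$ parts of the truncated sums cancel exactly under the odd/even split and that the residual constants recombine to $-\tfrac{1}{\sqrt2}\zeta(\tfrac12)$. This is the same sort of delicate manipulation of series and integrals that underlies Zagier's own proof of \eqref{zagier-sum}.
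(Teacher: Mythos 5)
Your proposal is correct and follows essentially the same route as the paper: specialize Theorem \ref{thm-main-0} to $x=\tfrac12$, shift via \eqref{lem10.2} with $k=2$, add Zagier's formula \eqref{zagier-sum}, and interleave the two algebraic series over even and odd indices. The only cosmetic difference is that you regularize the alternating Bessel series by partial-sum asymptotics of $\sum_{\ell\le M}\ell^{-1/2}$, whereas the paper uses the identity $\sum_{m\ge1}(-1)^{m+1}m^{-s}=(1-2^{1-s})\zeta(s)$ at $s=\tfrac12$; both yield the same $-\tfrac{1}{\sqrt2}\zeta(\tfrac12)$.
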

This paper is organized as follows. The preliminary results are collected in Section \ref{prelim}. Theorem \ref{thm-main-0} is proved in Section \ref{sec-proof-main}. We do not give the proof of Theorem \ref{thm-main-1} since the approach is similar to that of Theorem \ref{thm-main-0}. Section \ref{sec-recovery} is devoted to deriving Zagier's formula \eqref{zagier-sum} as a special case of Theorem \ref{thm-main-0}. Similarly, Section \ref{sec-periodicity} contains proof of the $6$-periodicity of $B_{2n+1}^{*}$ resulting from Theorem \ref{thm-main-1}. The asymptotic properties of Zagier polynomials are proved in Section \ref{asymptotics}. The Zagier-type exact formula for $B_{2n}^{*}\left(-\tfrac{3}{2}\right)+B_{2n}^{*}$ is derived in Section \ref{zagier-type}. Finally the paper concludes with three open problems discussed in Section \ref{op}.

\section{Preliminaries}\label{prelim}
The Bessel functions $J_{\nu}(z)$, and $Y_{\nu}(z)$ have the following asymptotic expansions for $|z|\to\infty$ and  $|\arg z|<\pi$ \cite[p.~199]{watson-1966a}:
 {\allowdisplaybreaks\begin{align}
J_{\nu}(z)&\sim \left(\frac{2}{\pi z}\right)^{\tfrac{1}{2}}\bigg(\cos w\sum_{n=0}^{\infty}\frac{(-1)^n(\nu, 2n)}{(2z)^{2n}} -\sin w\sum_{n=0}^{\infty}\frac{(-1)^n(\nu, 2n+1)}{(2z)^{2n+1}}\bigg),\nonumber\\
Y_{\nu}(z)&\sim \left(\frac{2}{\pi z}\right)^{\tf12}\bigg(\sin w\sum_{n=0}^{\infty}\frac{(-1)^n(\nu, 2n)}{(2z)^{2n}}+\cos w\sum_{n=0}^{\infty}\frac{(-1)^n(\nu, 2n+1)}{(2z)^{2n+1}}\bigg).\label{asymbess1}
\end{align}}
Here $w=z-\tfrac{1}{2}\pi \nu-\tfrac{1}{4}\pi$ and $(\nu,n)=\frac{\Gamma(\nu+n+1/2)}{\Gamma(n+1)\Gamma(\nu-n+1/2)}$.

Also, if $\nu\to\infty$ through positive real values, then the asymptotic expansions of $J_{\nu}(z)$ and $Y_{\nu}(z)$ for a non-zero fixed $z$ are given by \cite[p.~231]{olver-2010a}
\begin{align}
J_{\nu}(z)&\sim \frac{1}{\sqrt{2\pi\nu}}\left(\frac{ez}{2\nu}\right)^{\nu},\nonumber\\
Y_{\nu}(z)&\sim -\sqrt{\frac{2}{\pi\nu}}\left(\frac{ez}{2\nu}\right)^{-\nu}.\label{yorder-asymp}
\end{align}
The property \cite[p.~222, 10.4.1]{olver-2010a}
\begin{equation*}
J_{-n}(z)=(-1)^nJ_{n}(z)
\end{equation*}
for $n\in\mathbb{N}$ is used throughout the paper without mention. So will be the facts 
\begin{equation*}
J_{0}(0)=1, \hspace{4mm} J_{\nu}(0)=0,
\end{equation*}
for Re$(\nu)>0$.

The Chebyshev polynomials of the first and second kinds are respectively defined for $n\geq 0$ by the Binet formulas
\begin{align}
T_{n}(x) &= \frac{( x + \sqrt{x^{2}-1} )^{n} + ( x - \sqrt{x^{2}-1})^{n}}{2},\nonumber\\
U_{n}(x) &= \frac{( x + \sqrt{x^{2}-1} )^{n+1} - ( x - \sqrt{x^{2}-1})^{n+1}}{2 \sqrt{x^{2}-1}}\label{chebyshev}.
\end{align}
They are alternatively given by
\begin{align*}
T_{n}(\cos \theta) = \cos (n \theta), \hspace{4mm}
U_{n} ( \cos \theta)  = \frac{\sin( (n+1) \theta)}{\sin \theta}.
\end{align*}
The Poisson summation formula \cite[p.~60-61]{titchmarsh-1948a} states that if $f(t)$ is continuous and of 
bounded variation on $[0,\infty)$, and if $\int_{0}^{\infty}f(t)\, dt$ exists, then
 \begin{equation}\label{poisson}
 f(0) + 2 \sum_{m=1}^{\infty} f(m) = 2 \int_{0}^{\infty} f(t) \, dt + 4 \sum_{m=1}^{\infty} \int_{0}^{\infty} f(t) \cos(2 \pi m t ) \, dt.
 \end{equation}

\section{Proof of Theorem \ref{thm-main-0}}
\label{sec-proof-main}
\setcounter{equation}{0}

The proof of \eqref{zagier-sum}, as given in \cite{zagier-1998a}, begins with the representation \eqref{mod-ber-1} and 
uses the fact that all odd-indexed Bernoulli numbers, except the first one, vanish and that the even-indexed ones, namely $B_{2r}$, 
can be expressed in terms of the Riemann zeta function $\zeta(2r)$. This procedure does not extend very well to the case of Bernoulli 
polynomials, simply because the odd-indexed Bernoulli polynomials do not vanish, and leads to two complicated 
terminating ${}_5F_{2}$ hypergeometric functions which do not seem to produce anything like \eqref{zagier-sum}. 

The idea is to start with the formula 
\begin{equation*}
2 B_{2n}^{*}(x) = \sum_{r=0}^{n} (-1)^{n+r} \binom{n+r}{2r} \frac{B_{2r}(x)}{n+r} + 
U_{2n-1} \left( \frac{x}{2} \right) + U_{2n-1} \left( \frac{x+1}{2} \right),
\end{equation*}
\noindent 
established in \cite[Theorem 10.1]{dixit-2014b}. Observe that only even-indexed Bernoulli polynomials appear in this 
representation.
Separating the term $r=0$ and then using \eqref{ber-series1} yields 
\begin{equation}
\label{formula-ber11}
B_{2n}^{*}(x) = \frac{(-1)^{n}}{2n} + A(n,x) + \frac{1}{2} \left( 
U_{2n-1} \left( \frac{x}{2} \right) + U_{2n-1} \left( \frac{x+1}{2} \right) \right)
\end{equation}
\noindent
with $A(n,x)$ defined by 
\begin{equation*}
A(n,x) = (-1)^{n+1} \sum_{m=1}^{\infty} \cos(2 \pi m x) \sum_{r=0}^{n-1}  \frac{(n+r)!}{(n-r-1)!} 
\frac{1}{(2 \pi m)^{2r+2}}.
\end{equation*}

Upon replacing $r$ by $n-1-r$ in the above sum, the function $A(n,x)$ can be written in the form
\begin{equation}
A(n,x) = (-1)^{n+1} \sum_{m=1}^{\infty} S_{2n}(4 \pi m) \cos(2 \pi m x)
\label{form-A1}
\end{equation}
\noindent
where $S_{n}(x)$ is the Schl\"{a}fli  polynomial  \cite[p.~285]{watson-1966a} defined by $S_{0}(z) = 0$ and 
\begin{equation*}
S_{n}(z) = \sum_{r=0}^{\tfrac{n-a}{2}} \frac{(n-r-1)!}{r!} \left( \frac{z}{2} \right)^{2r-n}
\end{equation*}
\noindent
with 
\begin{equation*}
a = \begin{cases}
2 & \quad \text{ for } n \text{ even}, \\
1 & \quad \text{ for } n \text{ odd}. 
\end{cases}
\end{equation*}

The proof is now broken down into a series of lemmas for an easy perusal. 
A new expression for $A(n,x)$ is first presented. 
\begin{lemma}\label{lemma-first}
The function $A(n,x)$ is given by 
{\allowdisplaybreaks\begin{multline*}
A(n,x)  =  (-1)^{n} \pi\sum_{m=1}^{\infty} Y_{2n}(4 \pi m) \cos(2 \pi m x) \\
 \quad  + (-1)^{n+1} \sum_{m=1}^{\infty} \left\{ 2 ( \gamma + \log(2 \pi m) ) J_{2n}(4 \pi m) + 
 P_{2n}(4 \pi m ) - 2 Q_{2n}(4 \pi m) \right\} \cos(2 \pi m x),
 \end{multline*}}
 \noindent 
 where $\gamma$ is Euler's constant,  $Y_{n}(x)$ and $J_{n}(x)$ are defined by \eqref{bessely-int} and \eqref{besselj-int} respectively, and the functions 
 $P_{n}, \, Q_{n}$ are given by \cite[p. 341]{watson-1966a} \footnote{We have used $P_n(z)$ and $Q_n(z)$, instead of the standard notation $T_n(z)$ and $U_n(z)$, in order to avoid any possible confusion with the notation for the Chebyshev polynomials of the first and second kind.}
 \begin{eqnarray*}
 P_{n}(z)  & = &    - \sum_{r \geq \tfrac{n}{2}}^{n-1} 
 \frac{ (n-r-1)!}{r!} \left( \frac{z}{2} \right)^{2r-n}   \\ 
 & & \quad \quad + 
 \sum_{\ell=0}^{\infty} (-1)^{\ell}  \left( \frac{z}{2} \right)^{n+ 2 \ell} 
 \frac{( \psi(n+ \ell+1) - \psi(\ell+1))}{\ell! \, (n+ \ell)!} 
 \end{eqnarray*}
 \noindent
 and 
 \begin{eqnarray*}
 Q_{n}(z) & = & \sum_{\ell=0}^{\infty} (-1)^{\ell} \left( \frac{z}{2} \right)^{n+ 2 \ell} 
 \frac{( \psi(n + \ell + 1) + \gamma )}{\ell! \, (n+ \ell)!},
 \end{eqnarray*}
where $\psi(z)$ is the logarithmic derivative of the gamma function $\Gamma(z)$.
 \end{lemma}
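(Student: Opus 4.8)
The plan is to reduce Lemma \ref{lemma-first} to the single pointwise identity
\[
S_{2n}(z) = -\pi Y_{2n}(z) + 2\left(\gamma + \log\frac{z}{2}\right)J_{2n}(z) + P_{2n}(z) - 2Q_{2n}(z),
\]
valid for $z$ in the plane cut along the negative real axis. Granting this, I would put $z = 4\pi m$ (so that $\log(z/2) = \log(2\pi m)$), multiply through by $\cos(2\pi m x)$, and sum over $m \ge 1$; since $(-1)^{n+1}\bigl(-\pi Y_{2n}\bigr) = (-1)^n\pi Y_{2n}$, comparison with \eqref{form-A1} then yields precisely the two stated series for $A(n,x)$, once the termwise splitting of the sum has been justified.

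To prove the pointwise identity I would compare ascending expansions. Write the classical series for the Bessel function of the second kind of even integer order in the normalization
\[
\pi Y_{2n}(z) = 2\left(\gamma + \log\frac{z}{2}\right)J_{2n}(z) - \sum_{k=0}^{2n-1}\frac{(2n-1-k)!}{k!}\left(\frac{z}{2}\right)^{2k-2n} - \sum_{k=0}^{\infty}(-1)^k\left(\frac{z}{2}\right)^{2n+2k}\frac{\psi(k+1)+\psi(2n+k+1)+2\gamma}{k!\,(2n+k)!},
\]
the extra $2\gamma$ in the last numerator being exactly the price of having absorbed $\gamma$ into the logarithmic term. Substituting this, the transcendental term in $-\pi Y_{2n}(z) + 2(\gamma + \log(z/2))J_{2n}(z)$ cancels, leaving a Laurent tail $\sum_{k=0}^{2n-1}\frac{(2n-1-k)!}{k!}(z/2)^{2k-2n}$ together with an ascending power series whose coefficients involve $\psi(k+1)+\psi(2n+k+1)+2\gamma$. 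Now add $P_{2n}(z) - 2Q_{2n}(z)$: from the definitions, its ascending part equals $-\sum_{\ell\ge 0}(-1)^\ell (z/2)^{2n+2\ell}\frac{\psi(2n+\ell+1)+\psi(\ell+1)+2\gamma}{\ell!\,(2n+\ell)!}$, which cancels the ascending series termwise, while its finite part is $-\sum_{r=n}^{2n-1}\frac{(2n-1-r)!}{r!}(z/2)^{2r-2n}$, which removes exactly the terms $k = n, \dots, 2n-1$ from the Laurent tail. What remains is $\sum_{k=0}^{n-1}\frac{(2n-1-k)!}{k!}(z/2)^{2k-2n}$, which is $S_{2n}(z)$ by definition (the cut-off index $n-1 = (2n-2)/2$ being the one attached to the value $a = 2$). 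A shorter, less self-contained route would be to quote the formula from Watson's treatment of Schl\"afli's polynomials \cite{watson-1966a} relating $S_n$ to his second solution and to the polynomials there written $T_n, U_n$; converting Watson's normalization of the second solution to the modern $Y_n$ is what produces the $2(\gamma + \log(z/2))J_{2n}$ correction.

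It remains to justify the termwise split. By the identity just established, the bracketed quantity in Lemma \ref{lemma-first} equals $S_{2n}(4\pi m) + \pi Y_{2n}(4\pi m)$. The leading term of $S_{2n}(z)$ is $n(z/2)^{-2}$, so $S_{2n}(4\pi m) = O(m^{-2})$ and $\sum_{m\ge 1} S_{2n}(4\pi m)\cos(2\pi m x)$ converges absolutely; moreover $\sum_{m\ge 1} Y_{2n}(4\pi m)\cos(2\pi m x)$ converges conditionally by Remark 1. Hence the series $\sum_{m\ge 1}\bigl\{2(\gamma + \log(2\pi m))J_{2n}(4\pi m) + P_{2n}(4\pi m) - 2Q_{2n}(4\pi m)\bigr\}\cos(2\pi m x)$ converges, and the decomposition $A(n,x) = (-1)^{n+1}\sum_{m\ge 1}\bigl[-\pi Y_{2n}(4\pi m) + (\text{bracket})\bigr]\cos(2\pi m x)$ splits into the two asserted sums. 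The one genuinely delicate point is getting the pointwise identity exactly right --- keeping track of the normalization ($\gamma + \log$ versus plain $\log$) and of the precise cut-off index when matching the finite Laurent tails (or, on the Watson route, the conversion between Bessel-function conventions); everything else is bookkeeping.
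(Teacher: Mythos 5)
Your proposal is correct and follows essentially the same route as the paper: both rest on Watson's identity $S_{2n}(z) = -\pi Y_{2n}(z) + 2(\gamma+\log(z/2))J_{2n}(z) + P_{2n}(z) - 2Q_{2n}(z)$, substituted at $z=4\pi m$ into \eqref{form-A1}, with the split into two series justified by the conditional convergence noted in Remark 1. The only difference is that you verify the pointwise identity by matching ascending expansions (correctly, including the cut-off index $n-1$ and the $O(m^{-2})$ bound on $S_{2n}(4\pi m)$), whereas the paper simply cites it from Watson.
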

 \begin{proof}
 For $n \in \mathbb{N}$, the identity \cite[p. 340, 10.6]{watson-1966a} 
 \begin{eqnarray*}
 \pi Y_{n}(z) & = & - \sum_{r=0}^{n-1} \frac{(n-r-1)!}{r!} \left( \frac{z}{2} \right)^{2r-n} \label{form-Y1} \\
 & + & \sum_{\ell=0}^{\infty} (-1)^{\ell} \left( \frac{z}{2} \right)^{n+ 2 \ell} 
 \frac{ ( 2 \log (z/2) - \psi( \ell+1) - \psi(n+\ell+1)}{\ell! \, (n + \ell)!}
 \nonumber
 \end{eqnarray*}
\noindent
can be written in the form \cite[p. 340, 10.6(1)]{watson-1966a}
\begin{equation}
S_{n}(z) = - \pi Y_{n}(z) + 2 ( \gamma + \log(z/2) ) J_{n}(z) + P_{n}(z) - 2Q_{n}(z).
\label{form-S1} 
\end{equation}
\noindent
Now substitute \eqref{form-S1}, with $z=4\pi m$ and $n$ replaced by $2n$, in \eqref{form-A1} to produce 
the result. That the right-hand side can be written as the sum of two series follows from the first 
remark after the statement of Theorem \ref{thm-main-1}.
\end{proof}
The next lemma gives yet another expression for the function $A(n, x)$.
\begin{lemma}
\label{lemma-bessel-00}
The function $A(n,x)$ is given by 
\begin{eqnarray*}
A(n,x) & = & (-1)^{n} \pi\sum_{m=1}^{\infty} Y_{2n}(4 \pi m) \cos(2 \pi m x) \\
 & + & (-1)^{n+1}\sum_{m=1}^{\infty} \left(P_{2n}(4 \pi m) +  2\left.\frac{\partial}{\partial \nu}J_{\nu}(4 \pi m )\right|_{\nu=2n}\right)\cos( 2 \pi m x).
 \end{eqnarray*}
 \end{lemma}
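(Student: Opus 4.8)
The plan is to notice that the two expressions for $A(n,x)$ in Lemmas \ref{lemma-first} and \ref{lemma-bessel-00} already agree in their Bessel-$Y$ series and in the $P_{2n}$ contribution, so that it suffices to establish the pointwise identity
\[
\bigl(\gamma + \log(z/2)\bigr)J_{2n}(z) - Q_{2n}(z) = \left.\frac{\partial}{\partial\nu}J_{\nu}(z)\right|_{\nu=2n}
\]
for every $z>0$ (then specialize to $z=4\pi m$). Granting this, multiply by $2(-1)^{n+1}\cos(2\pi m x)$ and sum over $m\ge 1$; the regrouping is legitimate because, as recorded in Remark 1, the two series forming the right-hand side of Lemma \ref{lemma-first} converge individually.

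To prove the pointwise identity I would differentiate the defining series \eqref{besselj-int} term by term in $\nu$. For fixed $z>0$ this series converges locally uniformly in $\nu$ on all of $\mathbb{C}$ — on a compact $\nu$-set each term is bounded by a constant times $(z/2)^{2\ell}/\ell!$, using that $1/\Gamma$ is entire — so differentiation under the sum is justified. Since $\partial_\nu(z/2)^{2\ell+\nu}=(z/2)^{2\ell+\nu}\log(z/2)$ and $\partial_\nu\bigl(1/\Gamma(\ell+1+\nu)\bigr)=-\psi(\ell+1+\nu)/\Gamma(\ell+1+\nu)$, one gets
\[
\frac{\partial}{\partial\nu}J_{\nu}(z)=\sum_{\ell=0}^{\infty}\frac{(-1)^{\ell}(z/2)^{2\ell+\nu}}{\ell!\,\Gamma(\ell+1+\nu)}\bigl(\log(z/2)-\psi(\ell+1+\nu)\bigr),
\]
and putting $\nu=2n$ with $\Gamma(\ell+1+2n)=(\ell+2n)!$ yields a series with general term $\dfrac{(-1)^{\ell}(z/2)^{2\ell+2n}}{\ell!\,(\ell+2n)!}\bigl(\log(z/2)-\psi(\ell+2n+1)\bigr)$.

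It then remains to match this with the left-hand side term by term. The piece $\log(z/2)\,J_{2n}(z)$ contributes the $\log(z/2)$ part; the definition of $Q_{2n}$ in Lemma \ref{lemma-first} has general term $\dfrac{(-1)^{\ell}(z/2)^{2\ell+2n}}{\ell!\,(\ell+2n)!}\bigl(\psi(\ell+2n+1)+\gamma\bigr)$, so $-Q_{2n}(z)$ contributes $-\psi(\ell+2n+1)-\gamma$, and the extra $\gamma J_{2n}(z)$ cancels the stray $-\gamma$, leaving exactly $\log(z/2)-\psi(\ell+2n+1)$ in each term. This is precisely the series for $\partial_\nu J_\nu(z)\big|_{\nu=2n}$, proving the identity and hence the lemma.

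I do not anticipate a real obstacle: the only points requiring care are the justification of term-by-term $\nu$-differentiation (handled via local uniform convergence coming from the entireness of $1/\Gamma$) and the interchange of summation over $m$ with the regrouping of the logarithmic and $Q$-terms (handled by the separate convergence noted in Remark 1). Once those are in place the argument is a routine comparison of power series in $z$.
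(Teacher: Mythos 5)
Your proof is correct. The heart of the matter, in both your argument and the paper's, is the single identity
\begin{equation*}
\bigl(\gamma+\log(z/2)\bigr)J_{2n}(z)-Q_{2n}(z)=\left.\frac{\partial}{\partial\nu}J_{\nu}(z)\right|_{\nu=2n},
\end{equation*}
applied at $z=4\pi m$; once this holds for each fixed $m$, the $m$-th summands of the two series are literally equal, so no interchange or regrouping over $m$ is actually needed (your worry on that point is a non-issue). Where you differ from the paper is in how this identity is obtained. The paper quotes two Neumann-series representations from the literature — Watson's formula $Q_{n}(z)=J_{n}(z)\sum_{j=1}^{n}\frac{1}{j}+\sum_{k\ge1}\frac{(-1)^{k}(n+2k)}{k(n+k)}J_{n+2k}(z)$ and Luke's formula for $\partial_\nu J_\nu$ as a series in $J_{2k+\nu}$ — and observes that the two infinite Bessel sums cancel, finishing with $\psi(2n+1)=-\gamma+\sum_{j=1}^{2n}\frac{1}{j}$. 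You instead differentiate the defining power series of $J_\nu$ term by term in $\nu$ (justified by locally uniform convergence of the series of entire functions of $\nu$, via Weierstrass) and match coefficients against the power-series definition of $Q_{2n}$ as stated in Lemma \ref{lemma-first}; the coefficient check $\bigl(\gamma+\log(z/2)\bigr)-\bigl(\psi(2n+\ell+1)+\gamma\bigr)=\log(z/2)-\psi(2n+\ell+1)$ is exactly right. Your route is more self-contained — it needs only the definitions already in the paper and no external Neumann-series identities — at the cost of having to justify the termwise $\nu$-differentiation, which you do adequately. The paper's route is shorter but leans on two quoted formulas, one of which (\eqref{form-PQ1}) it needs later anyway for Lemma \ref{lemma-two4}.
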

 \begin{proof}
 The functions $P_{n}(z)$ and $Q_{n}(z)$ are also given by \cite[p. 344]{watson-1966a}
 \begin{eqnarray}
 P_{n}(z) & = & \sum_{k=1}^{\infty} \frac{1}{k} ( J_{n+2k}(z) - J_{n- 2k}(z) )  \label{form-PQ1}  \\
 Q_{n}(z) & = & J_{n}(z) \sum_{j=1}^{n} \frac{1}{j} + \sum_{k=1}^{\infty}  
 \frac{(-1)^{k} (n+2k)}{k(n+k)} J_{n+2k}(z). \label{form-PQ2}
 \end{eqnarray}
 Also, for a non-negative integer $\nu$, the derivative of $J_{\nu}(z)$ with respect to its order 
is given by \cite[p. 53, formula (39)]{luke-1969b}
 \begin{equation}\label{besselj-der}
 \frac{\partial}{\partial \nu} J_{\nu}(z) = \left( \log \left( \frac{z}{2} \right) - \psi(\nu+1) \right)  J_{\nu}(z) - 
 \sum_{k=1}^{\infty} \frac{(-1)^{k} (2k+ \nu)}{k(k+ \nu)} J_{2k + \nu}(z),
 \end{equation}
 \noindent
 Now \eqref{form-PQ2} and \eqref{besselj-der} together give
\begin{equation*}
Q_{2n}(4\pi m)=J_{2n}(4\pi m)\bigg(\sum_{j=1}^{2n}\frac{1}{j}+\log\left(2\pi m\right)-\psi(2n+1)\bigg)-\left.\frac{\partial}{\partial\nu}J_{\nu}(4\pi m)\right|_{\nu=2n}.
\end{equation*}
so that
\begin{align*}
&2\left(\gamma+\log\left(2\pi m\right)\right)J_{2n}(4\pi m)+P_{2n}(4\pi m)-2Q_{2n}(4\pi m)\nonumber\\
&=2\bigg(\gamma+\psi(2n+1)-\sum_{j=1}^{2n}\frac{1}{j}\bigg)J_{2n}(4\pi m)+P_{2n}(4\pi m)+2\left.\frac{\partial}{\partial\nu}J_{\nu}(4\pi m)\right|_{\nu=2n}.
\end{align*}
 The identity 
 \begin{equation*}
 \psi(2n+1) = -\gamma + \sum_{j=1}^{2n} \frac{1}{j}
 \end{equation*}
 \noindent
 that appears as entry $8.365.4$ in \cite{gradshteyn-2015a} and Lemma \ref{lemma-first} now complete the proof. 
 \end{proof}
 The next task is to find an almost closed-form expression for the second series in the 
above lemma. The following two new lemmas, interesting in their own right, show that the functions $P_{2n}(4\pi m)$ 
and $\left.\frac{\partial}{\partial \nu}J_{\nu}(4 \pi m )\right|_{\nu=2n}$ arise as 
Fourier coefficients in the Fourier expansions of some relatively simple functions.
\begin{lemma}
 \label{lemma-two4}
 For $n \in \mathbb{N}$ and $0 < x < 1$, the identity 
 \begin{align*}
&\sum_{m=1}^{\infty}P_{2n}(4\pi m)\cos(2\pi mx)\nonumber\\
&=\frac{1}{2n}+\frac{(-1)^n}{2\pi}\bigg\{\cos^{-1}\left(\frac{x}{2}\right)U_{2n-1}\left(\frac{x}{2}\right)+\cos^{-1}\left(\frac{x+1}{2}\right)U_{2n-1}\left(\frac{x+1}{2}\right)\nonumber\\
&\quad\quad\quad\quad\quad+\cos^{-1}\left(\frac{1-x}{2}\right)U_{2n-1}\left(\frac{1-x}{2}\right)+\cos^{-1}\left(\frac{2-x}{2}\right)U_{2n-1}\left(\frac{2-x}{2}\right)\bigg\}
\end{align*}
 holds.
 \end{lemma}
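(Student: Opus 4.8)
The plan is to expand $P_{2n}(4\pi m)$ via the representation \eqref{form-PQ1},
\begin{equation*}
P_{2n}(4\pi m)=\sum_{k=1}^{\infty}\frac1k\bigl(J_{2n+2k}(4\pi m)-J_{2n-2k}(4\pi m)\bigr),
\end{equation*}
and thereby reduce the left-hand side, after interchanging the two summations, to
\begin{equation*}
\sum_{m=1}^{\infty}P_{2n}(4\pi m)\cos(2\pi mx)=\sum_{k=1}^{\infty}\frac1k\bigl(C(2n+2k)-C(2n-2k)\bigr),\qquad C(\mu):=\sum_{m=1}^{\infty}J_{\mu}(4\pi m)\cos(2\pi mx).
\end{equation*}
Here $C$ is even in $\mu$ (since $J_{-\ell}=(-1)^{\ell}J_{\ell}$), so $C(2n-2k)=C(2|n-k|)$, and the index $k=n$ produces $C(0)=\sum_{m}J_{0}(4\pi m)\cos(2\pi mx)$, which will turn out to be responsible for the constant $\tfrac1{2n}$.

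The central computation is the evaluation of the Schl\"omilch-type series $C(2p)$, $p\in\mathbb{Z}$. Applying the Poisson summation formula \eqref{poisson} to $f(t)=J_{2p}(4\pi t)\cos(2\pi xt)$ and invoking the Weber--Schafheitlin integral
\begin{equation*}
\int_{0}^{\infty}J_{\nu}(4\pi t)\cos(2\pi bt)\,dt=
\begin{cases}
\dfrac{\cos\!\bigl(\nu\arcsin(b/2)\bigr)}{2\pi\sqrt{4-b^{2}}}, & 0<b<2,\\[3mm]
-\,\dfrac{(4\pi)^{\nu}\sin(\tfrac12\nu\pi)}{2\pi\,(2\pi)^{\nu}\sqrt{b^{2}-4}\,\bigl(b+\sqrt{b^{2}-4}\,\bigr)^{\nu}}, & b>2,
\end{cases}
\end{equation*}
one observes that for even order $\nu=2p$ the factor $\sin(\tfrac12\nu\pi)$ vanishes, so the translates with $b>2$ contribute nothing; for $0<x<1$ only the four ``near'' translates $b\in\{x,\,x+1,\,1-x,\,2-x\}$ survive. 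Writing $\xi_{j}$ for $\tfrac x2,\tfrac{x+1}2,\tfrac{1-x}2,\tfrac{2-x}2$ and $\theta_{j}:=\cos^{-1}\xi_{j}\in(0,\tfrac\pi2)$, and using $\cos\bigl(2p\arcsin\xi\bigr)=(-1)^{p}\cos\bigl(2p\,\cos^{-1}\xi\bigr)$ together with $\sqrt{4-(2\xi)^{2}}=2\sin(\cos^{-1}\xi)$, this gives
\begin{equation*}
C(2p)=-\tfrac12\,\delta_{p,0}+\frac{(-1)^{p}}{4\pi}\sum_{j}\frac{\cos(2p\,\theta_{j})}{\sin\theta_{j}},
\end{equation*}
the term $-\tfrac12\delta_{p,0}$ arising from $f(0)=J_{0}(0)=1$ on the left of \eqref{poisson}.

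Substituting this into the $k$-sum, the term $-\tfrac12\delta_{p,0}$ occurs only for $p=n-k=0$, i.e. $k=n$, in the factor $-C(2n-2k)$, and contributes $\sum_{k}\tfrac1{2k}\delta_{k,n}=\tfrac1{2n}$. In the remaining sum the signs combine as $(-1)^{n+k}=(-1)^{n-k}=(-1)^{n}(-1)^{k}$, and the product-to-sum identity $\cos\bigl(2(n+k)\theta\bigr)-\cos\bigl(2(n-k)\theta\bigr)=-2\sin(2n\theta)\sin(2k\theta)$ reduces everything to
\begin{equation*}
\sum_{m=1}^{\infty}P_{2n}(4\pi m)\cos(2\pi mx)=\frac1{2n}-\frac{(-1)^{n}}{2\pi}\sum_{j}\frac{\sin(2n\theta_{j})}{\sin\theta_{j}}\sum_{k=1}^{\infty}\frac{(-1)^{k}\sin(2k\theta_{j})}{k}.
\end{equation*}
Now $\sin(2n\theta)/\sin\theta=U_{2n-1}(\cos\theta)$ by \eqref{chebyshev}, and the standard Fourier series $\sum_{k\geq1}(-1)^{k-1}\sin(2k\theta)/k=\theta$, valid for $\theta\in(-\tfrac\pi2,\tfrac\pi2)$ and hence for each $\theta_{j}$, gives $\sum_{k\geq1}(-1)^{k}\sin(2k\theta_{j})/k=-\theta_{j}$. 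Recalling $\theta_{j}=\cos^{-1}\xi_{j}$, this is precisely the right-hand side of Lemma \ref{lemma-two4}.

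The main obstacle is the rigorous justification of the two interchanges. The series defining $C(2p)$ is only conditionally convergent (see Remark 1), and $f(t)=J_{2p}(4\pi t)\cos(2\pi xt)$ is not of bounded variation on $[0,\infty)$, so \eqref{poisson} does not apply verbatim; likewise the $k$-sum cannot be swapped with the conditionally convergent $m$-sum by absolute convergence. I would resolve both by an Abel-type regularization: attach a factor $e^{-\varepsilon m}$, equivalently work with $f_{\varepsilon}(t)=e^{-\varepsilon t}J_{2p}(4\pi t)\cos(2\pi xt)$ to which \eqref{poisson} legitimately applies, perform all the manipulations above, and let $\varepsilon\to0^{+}$. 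In the regularized setting the interchange with the $k$-sum is valid because, for fixed $m$, $J_{2n\pm2k}(4\pi m)$ decays super-exponentially in $k$ by \eqref{yorder-asymp}; and the passage $\varepsilon\to0^{+}$ on the series side is controlled by the conditional convergence of Remark 1, the key point being that the leading asymptotic contributions $\pm(-1)^{n+k}(2\pi\sqrt m)^{-1}$ inside $P_{2n}(4\pi m)$ cancel. An alternative that sidesteps Poisson summation is to compute the Fourier cosine coefficients of the claimed right-hand side directly: the substitution $x/2=\cos\theta$ turns $\int_{0}^{1}\cos^{-1}(x/2)\,U_{2n-1}(x/2)\cos(2\pi mx)\,dx$ into elementary integrals of the form $\int\theta\sin(2n\theta)\,d\theta$; but the route above is the one that connects with the Schl\"omilch-series computations of the diffraction-theory literature cited in the introduction.
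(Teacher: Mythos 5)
Your formal computation is correct and, in fact, reproduces almost verbatim the ``alternative approach'' that the authors themselves sketch in Section \ref{op} of the paper --- and which they deliberately do \emph{not} adopt as the proof, precisely because of the step you flag: the interchange of the $k$-sum with the conditionally convergent $m$-sum is stated there as Problem 1 and left open. So the crux of your argument is an acknowledged open problem, not a routine technicality. Your proposed Abel regularization $e^{-\varepsilon m}$ does make the double sum absolutely convergent for fixed $\varepsilon>0$ (Fubini then applies), and Abel's theorem handles the limit on the left-hand side; but the limit $\varepsilon\to0^{+}$ on the right-hand side must be taken \emph{inside} the infinite $k$-sum, and that requires a bound on $C_{\varepsilon}(2n+2k)-C_{\varepsilon}(2n-2k)$ that is uniform in $k$. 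The cancellation of the leading $m^{-1/2}$ tails between $J_{2n+2k}(4\pi m)$ and $J_{2n-2k}(4\pi m)$ that you invoke is an order-by-order statement; the error terms in the asymptotic expansion \eqref{asymbess1} involve the coefficients $(\nu,j)$, which grow with the order $\nu=2n\pm2k$, so uniformity in $k$ is exactly the point that needs an argument and does not follow from what you have written. There is a second, related gap: the Poisson summation formula \eqref{poisson} as stated requires $f$ to be of bounded variation with $\int_{0}^{\infty}f$ convergent, and $f(t)=J_{2p}(4\pi t)\cos(2\pi xt)=O(t^{-1/2})$ satisfies neither; your regularized $f_{\varepsilon}$ fixes this, but then the Weber--Schafheitlin evaluations you use are the $\varepsilon=0$ values, and the claim that the $b>2$ translates ``contribute nothing'' (via $\sin(p\pi)=0$) is again only true at $\varepsilon=0$, so their $\varepsilon$-dependent contributions must also be shown to vanish uniformly. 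None of this is impossible, but none of it is done, and the authors judged it hard enough to pose as an open problem.

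The route you mention in your last sentence as a fallback --- compute the Fourier cosine coefficients of the claimed right-hand side directly --- is the paper's actual proof, and it avoids every one of these difficulties because there is no conditionally convergent series to manipulate: one periodizes the (explicit, $C^{1}$) right-hand side, computes $a_{0}=0$ and $a_{m}$ by the substitution $x=2\cos\theta$, and identifies $a_{m}=P_{2n}(4\pi m)$ via \eqref{form-PQ1}; pointwise convergence of the Fourier series of a $1$-periodic $C^{1}$ function then finishes the argument. One correction to your description of that route, though: the coefficient integrals for $m\geq1$ are \emph{not} elementary integrals of the form $\int\theta\sin(2n\theta)\,d\theta$ (that is only $a_{0}$); for $a_{m}$ one faces $\int_{0}^{\pi/2}\theta\sin(2n\theta)\cos(4\pi m\cos\theta)\,d\theta$, which requires the Jacobi--Anger expansion $\cos(z\cos\theta)=J_{0}(z)+2\sum_{j\geq1}(-1)^{j}J_{2j}(z)\cos(2j\theta)$ and a careful reindexing of the resulting $j$-sum to recover $\sum_{k\geq1}k^{-1}\left(J_{2n+2k}-J_{2n-2k}\right)$. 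If you flesh out that computation, you have a complete proof; as it stands, your main route rests on an unproved interchange.
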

\begin{proof}
Let $f(x, n)$ denote the right-hand side of the above identity. Note that $f(1-x, n)=f(x, n)$. 
The periodization of $f(x, n)$, as a function of $x$, based on its values in $[0,1)$ makes it 
an even function of $x$. Hence its Fourier series is given by
\begin{equation*}
a_0+\sum_{m=1}^{\infty}a_m\cos(2\pi m x),
\end{equation*}
where
\begin{align*}
a_0=\int_{0}^{1}f(x, n)\, dx, \hspace{6mm} a_m=2\int_{0}^{1}f(x, n)\cos(2\pi m x)\, dx,
\end{align*}
for $m\geq 1$. These Fourier coefficients are now computed. Note that the change of 
variable $x=2\cos\theta$ yields 
\begin{equation*}
\int_{0}^{1}U_{2n-1}\left(\frac{x}{2}\right)\cos^{-1}\left(\frac{x}{2}\right)\, dx=2\int_{\pi/3}^{\pi/2}\theta\sin(2n\theta)\, d\theta.
\end{equation*}
Similarly, combining all such integrals, it is seen that
\begin{align*}
a_0&=\frac{1}{2n}+\frac{2(-1)^n}{\pi}\int_{0}^{\pi/2}\theta\sin(2n\theta)\, d\theta\\
&=0.\nonumber
\end{align*}
Also, a similar procedure gives
\begin{align}\label{am-1}
a_m&=2\int_{0}^{1}\frac{\cos(2\pi mx)}{2n}\, dx+\frac{4(-1)^n}{\pi}\int_{0}^{\pi/2}\theta\sin(2n\theta)\cos\left(4\pi m\cos\theta\right)\, d\theta\\
&=\frac{4(-1)^n}{\pi}\int_{0}^{\pi/2}\theta\sin(2n\theta)\bigg(J_{0}(4\pi m)+2\sum_{j=1}^{\infty}(-1)^{j}J_{2j}(4\pi m)\cos(2j\theta)\bigg)\, d\theta,\nonumber
\end{align}
where in the last step, formula 10.12.3 in \cite[p.~226]{olver-2010a} was used. Now
\begin{align}\label{am-2}
\frac{4(-1)^nJ_{0}(4\pi m)}{\pi}\int_{0}^{\pi/2}\theta\sin(2n\theta)d\theta=-\frac{J_{0}(4\pi m)}{n},
\end{align}
and
{\allowdisplaybreaks\begin{align}\label{calculation-p}
&\frac{8(-1)^n}{\pi}\int_{0}^{\pi/2}\theta\sin(2n\theta)\sum_{j=1}^{\infty}(-1)^{j}J_{2j}(4\pi m)\cos(2j\theta)\, d\theta\\
&=\frac{8(-1)^n}{\pi}\bigg(\sum_{j=1\atop {j\neq n}}^{\infty}(-1)^{j}J_{2j}(4\pi m)\int_{0}^{\pi/2}\theta\sin(2n\theta)\cos(2j\theta)\, d\theta\nonumber\\
&\quad+(-1)^nJ_{2n}(4\pi m)\int_{0}^{\pi/2}\theta\sin(2n\theta)\cos(2n\theta)\, d\theta\bigg)\nonumber\\
&=2n\sum_{j=1\atop {j\neq n}}^{\infty}\frac{J_{2j}(4\pi m)}{j^2-n^2}-\frac{J_{2n}(4\pi m)}{2n}\nonumber\\
&=-\sum_{j=1\atop {j\neq n}}^{\infty}J_{2j}(4\pi m)\left(\frac{1}{n+j}+\frac{1}{n-j}\right)-\frac{J_{2n}(4\pi m)}{2n}\nonumber\\
&=-\bigg(\sum_{k=n+1\atop {k\neq 2n}}^{\infty}\frac{J_{2k-2n}(4\pi m)}{k}+\sum_{k=-\infty\atop {k\neq 0}}^{n-1}\frac{J_{2n-2k}(4\pi m)}{k}\bigg)-\frac{J_{2n}(4\pi m)}{2n}\nonumber\\
&=-\bigg(\sum_{k=n+1\atop {k\neq 2n}}^{\infty}\frac{J_{2k-2n}(4\pi m)}{k}-\sum_{k=1}^{\infty}\frac{J_{2n+2k}(4\pi m)}{k}+\sum_{k=1}^{n-1}\frac{J_{2n-2k}(4\pi m)}{k}\bigg)
-\frac{J_{2n}(4\pi m)}{2n}\nonumber\\
&=\sum_{k=1}^{\infty}\frac{J_{2n+2k}(4\pi m)}{k}-\sum_{k=1\atop {k\neq n}}^{\infty}\frac{J_{2n-2k}(4\pi m)}{k}.\nonumber
\end{align}}
Together with \eqref{am-1} and \eqref{am-2}, this implies
\begin{align*}
a_m=\sum_{k=1}^{\infty}\frac{J_{2n+2k}(4\pi m)-J_{2n-2k}(4\pi m)}{k},
\end{align*}
which, according to \eqref{form-PQ1}, is equal to $P_{2n}(4\pi m)$. 

Lastly since $f(x, n)$ is $1$-periodic and is of class $C^{1}$, its Fourier series converges to it. This proves the lemma.
\end{proof}

The result on $\left.\frac{\partial}{\partial \nu}J_{\nu}(4 \pi m )\right|_{\nu=2n}$, arising as 
Fourier coefficients of a function involving Chebyshev polynomials, inverse trigonometric and some infinite series, is now obtained.

\begin{lemma}\label{lemma-two5}
Let $g(y, r, x)$ be defined in \eqref{gynx}. For $n \in \mathbb{N}$ and $0 < x < 1$, the identity 
{\allowdisplaybreaks\begin{align*}
  &\sum_{m=1}^{\infty} \left.\frac{\partial }{\partial \nu}J_{\nu}(4 \pi m)\right|_{\nu=2n} \cos(2 \pi m x)\\
  & =\frac{(-1)^{n}}{4 \pi} \bigg\{ \sin^{-1} \left( \frac{x}{2} \right) U_{2n-1} \left( \frac{x}{2} \right) + 
   \sin^{-1} \left( \frac{x+1}{2} \right) U_{2n-1} \left( \frac{x+1}{2} \right)  \\
   & \quad \quad \quad \quad\quad + \sin^{-1} \left( \frac{1-x}{2} \right) U_{2n-1} \left( \frac{1-x}{2} \right) + 
    \sin^{-1} \left( \frac{2-x}{2} \right) U_{2n-1} \left( \frac{2-x}{2}  \right) \bigg\} \\
    &\quad+\frac{(-1)^{n+1}}{4^{n+1}} \bigg(\sum_{m=1}^{\infty}g(m, n, x)+\sum_{m=1}^{\infty}g(m, n, 1-x)\bigg)
    \end{align*}}
		holds.
\end{lemma}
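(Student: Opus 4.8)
The plan is to prove Lemma~\ref{lemma-two5} by the same Fourier-coefficient technique used for Lemma~\ref{lemma-two4}, the new observation being the elementary relation $\sin^{-1}t=\tfrac{\pi}{2}-\cos^{-1}t$ — valid with principal branches because for $0<x<1$ all four arguments $\tfrac{x}{2},\tfrac{x+1}{2},\tfrac{1-x}{2},\tfrac{2-x}{2}$ lie in $(0,1)$ — which lets one feed Lemma~\ref{lemma-two4} back into the computation. Write $h(x,n)$ for the right-hand side of Lemma~\ref{lemma-two5}. Since the four $U_{2n-1}$-arguments occurring in $h$ are precisely those of Lemma~\ref{lemma-two4}, that relation turns the $\sin^{-1}$-bracket into $\tfrac{\pi}{2}\bigl(U_{2n-1}(\tfrac{x}{2})+U_{2n-1}(\tfrac{x+1}{2})+U_{2n-1}(\tfrac{1-x}{2})+U_{2n-1}(\tfrac{2-x}{2})\bigr)$ minus the $\cos^{-1}$-bracket, and Lemma~\ref{lemma-two4} replaces the latter by $\sum_{m\ge1}P_{2n}(4\pi m)\cos(2\pi mx)$. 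Thus Lemma~\ref{lemma-two5} is equivalent to the assertion that
\[
G(x,n):=\tfrac{(-1)^{n}}{8}\bigl(U_{2n-1}(\tfrac{x}{2})+U_{2n-1}(\tfrac{x+1}{2})+U_{2n-1}(\tfrac{1-x}{2})+U_{2n-1}(\tfrac{2-x}{2})\bigr)+\tfrac{1}{4n}+\tfrac{(-1)^{n+1}}{4^{n+1}}\Bigl(\sum_{m\ge1}g(m,n,x)+\sum_{m\ge1}g(m,n,1-x)\Bigr)
\]
has zero Fourier constant term and $m$-th Fourier cosine coefficient $\partial_\nu J_\nu(4\pi m)|_{\nu=2n}+\tfrac12 P_{2n}(4\pi m)$ for $m\ge1$.

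The periodization of $G$ is even (invariant under $x\mapsto 1-x$), so its Fourier series is a pure cosine series. To compute its coefficients, for the Chebyshev part I would use the substitutions $\tfrac{x}{2}=\cos\theta$, $\tfrac{x+1}{2}=\cos\theta$, $\tfrac{1-x}{2}=\cos\theta$, $\tfrac{2-x}{2}=\cos\theta$ together with $U_{2n-1}(\cos\theta)=\sin(2n\theta)/\sin\theta$; exactly as in Lemma~\ref{lemma-two4}, the four resulting integrals over $[\tfrac{\pi}{3},\tfrac{\pi}{2}]$ and $[0,\tfrac{\pi}{3}]$ reassemble into $4\int_0^{\pi/2}\sin(2n\theta)\cos(4\pi m\cos\theta)\,d\theta$. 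For the $g$-series, the change of variable $u=m'+1+x$ converts $\sum_{m'\ge1}\int_0^1 g(m',n,x)\cos(2\pi mx)\,dx$ into the single integral $\int_2^\infty\frac{(u-\sqrt{u^2-4})^{2n}}{\sqrt{u^2-4}}\cos(2\pi mu)\,du$ (the two $g$-series contribute equally, by $x\mapsto 1-x$), and then $u=2\cosh t$ turns this into $2^{2n}\int_0^\infty e^{-2nt}\cos(4\pi m\cosh t)\,dt$. Collecting constants one finds, for $m\ge1$,
\[
a_m=(-1)^{n}\int_0^{\pi/2}\sin(2n\theta)\cos(4\pi m\cos\theta)\,d\theta-(-1)^{n}\int_0^\infty e^{-2nt}\cos(4\pi m\cosh t)\,dt,
\]
and a short parity computation (using $\theta\mapsto\tfrac{\pi}{2}-\theta$) gives $a_0=0$.

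The heart of the proof is the identification of $a_m$ with $\partial_\nu J_\nu(4\pi m)|_{\nu=2n}+\tfrac12 P_{2n}(4\pi m)$. For $\mathrm{Re}\,z>0$ I would use two integral representations: differentiating the classical Schl\"afli integral $J_\nu(z)=\tfrac1\pi\int_0^\pi\cos(\nu\theta-z\sin\theta)\,d\theta-\tfrac{\sin\nu\pi}{\pi}\int_0^\infty e^{-\nu t-z\sinh t}\,dt$ in $\nu$ at $\nu=2n$ gives $\partial_\nu J_\nu(z)|_{2n}=-\tfrac1\pi\int_0^\pi\theta\sin(2n\theta-z\sin\theta)\,d\theta-\int_0^\infty e^{-2nt-z\sinh t}\,dt$, while inserting $J_m(z)=\tfrac1\pi\int_0^\pi\cos(m\theta-z\sin\theta)\,d\theta$ into \eqref{form-PQ1} and using $\sum_{k\ge1}\frac{\sin 2k\theta}{k}=\tfrac{\pi-2\theta}{2}$ on $(0,\pi)$ gives $P_{2n}(z)=\tfrac1\pi\int_0^\pi(2\theta-\pi)\sin(2n\theta-z\sin\theta)\,d\theta$. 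Adding these, the $\theta$-coefficient of the integrand collapses to $-\tfrac{\pi}{2}$, and the parity of the integrand about $\theta=\tfrac{\pi}{2}$ yields $\partial_\nu J_\nu(z)|_{2n}+\tfrac12 P_{2n}(z)=\int_0^{\pi/2}\cos(2n\theta)\sin(z\sin\theta)\,d\theta-\int_0^\infty e^{-2nt-z\sinh t}\,dt$. Setting $z=4\pi m$ and comparing with the formula for $a_m$ (again via $\theta\mapsto\tfrac{\pi}{2}-\theta$), the required equality becomes the single identity
\[
-\int_0^{\pi/2}\sin(2n\theta+z\sin\theta)\,d\theta=(-1)^{n}\int_0^\infty e^{-2nt}\cos(z\cosh t)\,dt-\int_0^\infty e^{-2nt-z\sinh t}\,dt,
\]
which I would prove by Cauchy's theorem for the entire function $\Psi(w)=e^{2niw+iz\sin w}$ on the half-strip $\{0\le\mathrm{Re}\,w\le\tfrac{\pi}{2},\ \mathrm{Im}\,w\ge0\}$: the top side at $\mathrm{Im}\,w=+\infty$ contributes nothing since $\mathrm{Re}(iz\sin w)\to-\infty$ there, the left edge $w=it$ contributes $i\int_0^\infty e^{-2nt-z\sinh t}\,dt$, and the right edge $w=\tfrac{\pi}{2}+it$ contributes $(-1)^{n}i\int_0^\infty e^{-2nt}e^{iz\cosh t}\,dt$ — convergent because $z$ is real, so $|e^{iz\cosh t}|=1$ — and taking imaginary parts gives the identity.

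Finally, the periodization of $h(\cdot,n)$ is smooth on $(0,1)$: the finite Chebyshev-and-inverse-trigonometric part is, and for $n\ge1$ each series $\sum_{m'}g(m',n,\cdot)$ (whose terms decay like $(m')^{-2n-1}$) converges locally uniformly on $(0,1)$ together with all its $x$-derivatives. Hence the Fourier series of the periodization converges to it pointwise on $(0,1)$, which together with the coefficient computation completes the proof. The step requiring genuine care — everything else parallels Lemma~\ref{lemma-two4} — is this last identification of $a_m$: selecting the integral representations of $\partial_\nu J_\nu$ and $P_{2n}$, carrying out the parity reductions, and, above all, justifying the contour deformation for $\Psi$ (convergence on the two vertical edges, decay on the top), as well as the interchange of summation and integration in the formula for $P_{2n}$ and the differentiation under the integral for $\partial_\nu J_\nu$.
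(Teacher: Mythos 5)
Your proposal is correct, and in its decisive step it takes a genuinely different route from the paper. The reduction is different from the outset: you invoke $\sin^{-1}t+\cos^{-1}t=\tfrac{\pi}{2}$ immediately so as to feed Lemma \ref{lemma-two4} into the proof and work with the combination $\left.\partial_\nu J_\nu(4\pi m)\right|_{\nu=2n}+\tfrac12 P_{2n}(4\pi m)$, whereas the paper proves Lemma \ref{lemma-two5} independently of Lemma \ref{lemma-two4} (the identity $\sin^{-1}+\cos^{-1}=\tfrac{\pi}{2}$ only enters later, in assembling \eqref{finalaxn}); this makes your Lemma \ref{lemma-two5} logically dependent on Lemma \ref{lemma-two4}, which is harmless but worth flagging. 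The real divergence is in identifying the Fourier coefficient $a_m$. The paper reaches the same integral $(-1)^{n+1}\int_0^\infty e^{-2n\phi}\cos(4\pi\ell\cosh\phi)\,d\phi$ and then proves the explicit Bessel-series evaluation \eqref{integral-id} by showing both sides satisfy Coates's differential equation \eqref{de} with matching data at $u=0$ — a verification the authors themselves call tedious, resting on Lommel functions and several entries from Hansen and Nielsen. You instead represent $\left.\partial_\nu J_\nu\right|_{2n}$ via the $\nu$-derivative of Schl\"afli's integral and $P_{2n}$ via Bessel's integral together with $\sum_k\tfrac{\sin 2k\theta}{k}=\tfrac{\pi-2\theta}{2}$, observe that the $\theta$-weights collapse to a constant in the sum, and reduce everything to the single identity
\begin{equation*}
-\int_0^{\pi/2}\sin(2n\theta+z\sin\theta)\,d\theta=(-1)^{n}\int_0^\infty e^{-2nt}\cos(z\cosh t)\,dt-\int_0^\infty e^{-2nt-z\sinh t}\,dt,
\end{equation*}
which your contour argument for $e^{2niw+iz\sin w}$ on the half-strip does establish (I checked the edge contributions and the decay of the top edge, which uses $n\ge1$ and $z>0$). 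This is cleaner and avoids the ODE machinery entirely; the trade-off is that it never produces the explicit evaluation \eqref{integral-id}, which the paper advertises as a result of independent interest, and it requires the routine but necessary justifications you already list (differentiation under Schl\"afli's integral, the bounded-partial-sums argument for interchanging $\sum_k$ with $\int_0^\pi$, and the splitting of the convergent series $\sum_m a_m\cos(2\pi mx)$ into its two pieces, which is legitimate because Lemma \ref{lemma-two4} already guarantees convergence of $\sum_m P_{2n}(4\pi m)\cos(2\pi mx)$). Your constant-term computation does give $a_0=0$, though it needs the actual values of the two pieces ($\tfrac{(-1)^n}{4n}$ and $\tfrac{(-1)^{n+1}}{4n}$) rather than a pure parity argument.
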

\begin{proof}
Let 
\begin{align*}
h_{1}(x, n)&=\frac{(-1)^{n}}{4 \pi} \bigg\{ \sin^{-1} \left( \frac{x}{2} \right) U_{2n-1} \left( \frac{x}{2} \right) + 
   \sin^{-1} \left( \frac{x+1}{2} \right) U_{2n-1} \left( \frac{x+1}{2} \right)\\
	& \quad \quad  + \sin^{-1} \left( \frac{1-x}{2} \right) U_{2n-1} \left( \frac{1-x}{2} \right) + 
    \sin^{-1} \left( \frac{2-x}{2} \right) U_{2n-1} \left( \frac{2-x}{2}  \right) \bigg\},\\
h_{2}(x, n)&=\frac{(-1)^{n+1}}{4^{n+1}} \bigg(\sum_{m=1}^{\infty}g(m, n, x)+\sum_{m=1}^{\infty}g(m, n, 1-x)\bigg),
\end{align*}
and let 
\begin{equation*}
h(x, n)=h_{1}(x, n)+h_{2}(x, n).
\end{equation*}
Note that $h(1-x, n)=h(x, n)$. 
The periodization of $h(x, n)$, as a function of $x$, based on its values in $[0,1)$ makes it 
an even function of $x$. Hence its Fourier series is given by
\begin{equation*}
b_0+\sum_{\ell=1}^{\infty}b_{\ell}\cos(2\pi \ell x),
\end{equation*}
where
\begin{align*}
b_0=\int_{0}^{1}h(x, n)\, dx, \hspace{6mm} b_{\ell}=2\int_{0}^{1}h(x, n)\cos(2\pi \ell x)\, dx,
\end{align*}
for $\ell\geq 1$. The coefficient $b_{0}$ is computed first. Observe that the change of variable $x=2\cos\theta$ yields 
\begin{equation*}
\int_{0}^{1}U_{2n-1}\left(\frac{x}{2}\right)\sin^{-1}\left(\frac{x}{2}\right)\, dx=2\int_{\pi/3}^{\pi/2}\left(\frac{\pi}{2}-\theta\right)\sin(2n\theta)\, d\theta.
\end{equation*}
Computing the other three integrals in the similar way, it is seen that
\begin{align}\label{h1}
\int_{0}^{1}h_1(x, n)\, dx=\frac{(-1)^n}{\pi}\int_{0}^{\pi/2}\left(\frac{\pi}{2}-\theta\right)\sin(2n\theta)\, d\theta=\frac{(-1)^{n}}{4n}.
\end{align}
Note that
\begin{align*}
\int_{0}^{1}h_2(x, n)\, dx=\frac{2(-1)^{n+1}}{4^{n+1}}\int_{0}^{1}\sum_{m=1}^{\infty}g(m, n, x)\, dx=\frac{(-1)^{n+1}}{2^{2n+1}}\sum_{m=1}^{\infty}\int_{0}^{1}g(m, n, x)\, dx,
\end{align*}
where the interchange of the order of summation and integration is valid because of absolute convergence. 

Let $y=m+x-1$ in the above integral and observe that $(m-1+x)(m+3+x)=(m+1+x)^2-4$ 
so as to have
\begin{align}\label{h2}
\int_{0}^{1}h_2(x, n)\, dx&=\frac{(-1)^{n+1}}{2^{2n+1}}\sum_{m=1}^{\infty}\int_{m-1}^{m}\frac{\big(y+2-\sqrt{(y+2)^2-4}\big)^{2n}}{\sqrt{(y+2)^2-4}}\, dy\\
&=\frac{(-1)^{n+1}}{2^{2n+1}}\int_{0}^{\infty}\frac{\big(y+2-\sqrt{(y+2)^2-4}\big)^{2n}}{\sqrt{(y+2)^2-4}}\, dy.\nonumber
\end{align}
The change of variable $y+2=\sec\theta$ transforms the above integral into
\begin{align*}
\int_{0}^{\infty}\frac{\big(y+2-\sqrt{(y+2)^2-4}\big)^{2n}}{\sqrt{(y+2)^2-4}}\, dy&=2^{2n}\int_{0}^{\pi/2}(\sec\theta-\tan\theta)^{2n}\sec\theta\, d\theta\\
&=2^{2n}\int_{0}^{1}t^{2n-1}\, dt\nonumber\\
&=\frac{2^{2n-1}}{n},
\end{align*}
by another substitution $\sec\theta-\tan\theta=t$. Along with \eqref{h1} and \eqref{h2}, this yields
\begin{equation}\label{b0}
b_0=0.
\end{equation}
Next, $b_{\ell}$ is computed. Using exactly the same approach as in \eqref{am-1}, it is seen that
{\allowdisplaybreaks\begin{align*}
2\int_{0}^{1}h_1(x, n)\cos(2\pi \ell x)\, dx&=\frac{2(-1)^n}{\pi}\int_{0}^{\pi/2}\left(\frac{\pi}{2}-\theta\right)\sin(2n\theta)\cos\left(4\pi \ell\cos\theta\right)\, d\theta\\
&=-\frac{2}{\pi}\int_{0}^{\pi/2}\theta\sin(2n\theta)\cos\left(4\pi \ell\sin\theta\right)\, d\theta.\nonumber
\end{align*}}
Using the formula \cite[p.~226]{olver-2010a} 
\begin{equation*}
\cos(z\sin\theta)=J_{0}(z)+2\sum_{k=1}^{\infty}J_{2k}(z)\cos(2k\theta),
\end{equation*}
valid for $z, \theta\in\mathbb{C}$, and performing a calculation similar to that in \eqref{calculation-p}, one sees that
\begin{align}\label{h1-eval}
2\int_{0}^{1}h_1(x, n)\cos(2\pi \ell x)\, dx=-\frac{1}{2}\sum_{k=1}^{\infty}\frac{(-1)^k}{k}\left(J_{2n+2k}(4\pi\ell)-J_{2n-2k}(4\pi\ell)\right).
\end{align}
The difficult task now is to evaluate the corresponding integral involving $h_2(x, n)$. This 
integral is first written in a convenient form using an approach similar to that on 
the previous page, namely,
\begin{align}\label{h2-eval}
&2\int_{0}^{1}h_2(x, n)\cos(2\pi \ell x)\, dx\\&=\frac{(-1)^{n+1}}{2^{2n}}\int_{0}^{1}\sum_{m=1}^{\infty}g(m, n, x)\cos(2\pi\ell x)\, dx\nonumber\\
&=\frac{(-1)^{n+1}}{2^{2n}}\int_{0}^{\infty}\frac{\big(y+2-\sqrt{(y+2)^2-4}\big)^{2n}\cos(2\pi\ell y)}{\sqrt{(y+2)^2-4}}\, dy,\nonumber\\
&= (-1)^{n+1}\int_{0}^{\pi/2}(\sec\theta-\tan\theta)^{2n}\sec\theta\cos(4\pi\ell\sec(\theta))\, d\theta\nonumber\\
&= (-1)^{n+1}\int_{0}^{\infty}e^{-2n\phi}\cos\left(4\pi\ell\cosh\phi\right)\, d\phi,\nonumber
\end{align}
where the substitution $\sec\theta-\tan\theta=e^{-\phi}$ was used in the last step. 

C. V.~Coates \cite[p.~260]{coates} showed that the integral
\begin{equation}\label{coaint}
(-1)^{n+1}\int_{0}^{\infty}e^{-2n\phi}\cos\left(u\cosh\phi\right)\, d\phi
\end{equation}
satisfies the non-homogeneous second-order linear differential equation
\begin{equation}\label{de}
\frac{d^{2}y}{du^2}+\frac{1}{u}\frac{dy}{du}+y\left(1-\frac{4n^2}{u^2}\right)=\frac{2n(-1)^n\cos(u)}{u^2}.
\end{equation}
An integral whose real part is equal to above integral also appears in Whipple's work \cite[p.~106]{whipple-1917a} on diffraction theory. 
Yet no one seems to have explicitly evaluated it. Asatryan \cite[Equation (A 9)]{asatryan} has obtained an evaluation of this integral in terms of a nested finite sum involving factorials. The following new evaluation of \eqref{coaint} proved below for $u>0$ and $n\in\mathbb{N}$, namely
\begin{align}\label{integral-id}
&(-1)^{n+1}\int_{0}^{\infty}e^{-2n\phi}\cos\left(u\cosh\phi\right)\, d\phi\\
&=\left(\log\left(\tfrac{u}{2}\right)-\psi(2n+1)\right)J_{2n}(u)\nonumber\\
&\quad-\frac{1}{2}\sum_{k=1}^{\infty}\frac{(-1)^k}{k}\left(J_{2n+2k}(u)+J_{2n-2k}(u)\right)-\sum_{k=1}^{\infty}\frac{(-1)^k}{k+2n}J_{2n+2k}(u),\nonumber
\end{align}
will be useful in completing the proof of this lemma.

To establish this identity, the existence and uniqueness theorem for second-order linear differential equations is employed. Let 
$w(u, n)$ denote the right-hand side of \eqref{integral-id}. By brute force it can be verified (although it is fairly tedious) that  
\begin{align}\label{tedious}
&\frac{d^{2}w}{du^2}+\frac{1}{u}\frac{dw}{du}+w\left(1-\frac{4n^2}{u^2}\right)\\
&=\frac{1}{u}\left(J_{2n-1}(u)-J_{2n+1}(u)\right)-\frac{4}{u^2}\sum_{k=1}^{\infty}(-1)^{k}kJ_{2n+2k}(u)\nonumber\\
&\quad-\frac{2}{u^2}\bigg(\sum_{k=1}^{\infty}(-1)^{k}(k+2n)J_{2n+2k}(u)+\sum_{k=1}^{\infty}(-1)^{k}(k-2n)J_{2n-2k}(u)\bigg).\nonumber
\end{align}
 The standard formulas \cite[p.~222, 10.6.1]{olver-2010a}
\begin{equation*}
\frac{d}{dz}J_{\nu}(z)=\frac{1}{2}\left(J_{\nu-1}(z)-J_{\nu+1}(z)\right)
\end{equation*}
and
\begin{equation}\label{recur}
J_{\nu-1}(z)+J_{\nu+1}(z)=\frac{2\nu}{z}J_{\nu}(z),
\end{equation}
 are used in this verification. Substituting $j=k-2n$ in the sum on the extreme right in 
\eqref{tedious} and simplifying, the right-hand side of \eqref{tedious} is seen to be equal to
{\allowdisplaybreaks\begin{align}\label{ha-begin}
&\frac{1}{u}\left(J_{2n-1}(u)-J_{2n+1}(u)\right)\\
&\quad-\frac{2}{u^2}\bigg(\sum_{k=1}^{\infty}(-1)^{k}(2k+2n)J_{2n+2k}(u)-2n\sum_{k=1}^{\infty}(-1)^kJ_{2n+2k}(u)\bigg)\nonumber\\
&\quad-\frac{2}{u^2}\bigg(\sum_{k=1}^{\infty}(-1)^{k}(k+2n)J_{2n+2k}(u)+\sum_{j=1-2n}^{\infty}(-1)^jjJ_{-2j-2n}(u)\bigg)\nonumber\\
&=\frac{1}{u}\left(J_{2n-1}(u)-J_{2n+1}(u)\right)-\frac{4}{u^2}\sum_{k=1}^{\infty}(-1)^{k}(2k+2n)J_{2n+2k}(u)\nonumber\\
&\quad+\frac{4n}{u^2}\sum_{k=1}^{\infty}(-1)^kJ_{2n+2k}(u)+\frac{2}{u^2}\sum_{k=1}^{2n-1}(-1)^kkJ_{2n-2k}(u).\nonumber
\end{align}}
Now note that from \cite[p.~270]{neilsen-1904a},
\begin{equation}\label{ha-begin1}
\sum_{k=1}^{\infty}(-1)^{k}(2k+2n)J_{2n+2k}(u)=\frac{u}{2}J_{2n-1}(u)-2nJ_{2n}(u),
\end{equation}
whereas
\begin{equation}\label{ha-begin2}
\sum_{k=1}^{\infty}(-1)^kJ_{2n+2k}(u)=M_{2n}(u, u)-J_{2n}(u),
\end{equation}
where $M_{\nu}(s, z)$ is the Lommel function of two variables defined by \cite[p.~537, 16.5(5)]{watson-1966a} \footnote{The conventional notation $U_{n}(s, z)$ is avoided 
so as to not get confused with the one for the Chebyshev polynomial $U_n(x)$.}
\begin{equation*}
M_{\nu}(s,z)=\sum_{m=0}^{\infty}(-1)^m\left(\frac{s}{z}\right)^{\nu+2m}J_{\nu+2m}(z).
\end{equation*}
Further,
\begin{align}\label{ha1}
\sum_{k=1}^{2n-1}(-1)^kkJ_{2n-2k}(u)&=-\frac{1}{2}\sum_{k=1}^{2n-1}(-1)^{k}(2n-2k)J_{2n-2k}(u)\\
&\quad+n\bigg(\sum_{k=1}^{n}(-1)^{k}J_{2n-2k}(u)+\sum_{k=n+1}^{2n-1}(-1)^{k}J_{2n-2k}(u)\bigg)\nonumber
\end{align}
From \cite[p.~383, (57.4.15)]{hansen-1975a},
\begin{align}\label{ha2}
\sum_{k=1}^{2n-1}(-1)^{k}(2n-2k)J_{2n-2k}(u)=\frac{u}{2}\left(J_{2n+1}(u)-J_{-2n+1}(u)\right)-2nJ_{2n}(u).
\end{align}
Also from \cite[p.~379, (57.1.25)]{hansen-1975a},
\begin{align}\label{ha3}
\sum_{k=1}^{n}(-1)^{k}J_{2n-2k}(u)=\frac{(-1)^{n}}{2}(\cos u+J_{0}(u))-M_{2n}(u, u),
\end{align}
whereas using formula (57.1.19) from \cite[p.~378]{hansen-1975a},
\begin{align}\label{ha4}
\sum_{k=n+1}^{2n-1}(-1)^{k}J_{2n-2k}(u)&=(-1)^n\sum_{j=1}^{n-1}(-1)^{j}J_{2j}(u)\\
&=(-1)^{n+1}J_{0}(u)+\frac{(-1)^n}{2}(\cos u+J_{0}(u))-M_{2n}(u, u).\nonumber
\end{align}
Substituting \eqref{ha2}, \eqref{ha3} and \eqref{ha4} in \eqref{ha1} yields
\begin{align}\label{ha-final}
\sum_{k=1}^{2n-1}(-1)^kkJ_{2n-2k}(u)&=n(-1)^n\cos u+nJ_{2n}(u)\\
&\quad-\frac{u}{4}(J_{2n+1}(u)+J_{2n-1}(u))-2nM_{2n}(u, u).\nonumber
\end{align}
Substituting \eqref{ha-begin1}, \eqref{ha-begin2} and \eqref{ha-final} in \eqref{ha-begin}, and using \eqref{tedious} leads to
{\allowdisplaybreaks\begin{align*}
&\frac{d^{2}w}{du^2}+\frac{1}{u}\frac{dw}{du}+w\left(1-\frac{4n^2}{u^2}\right)\\
&=\frac{2n(-1)^n\cos u}{u^2}+\frac{6n}{u^2}J_{2n}(u)-\frac{3}{2u}(J_{2n-1}(u)+J_{2n+1}(u))\nonumber\\
&=\frac{2n(-1)^n\cos u}{u^2},\nonumber
\end{align*}}
where the last step follows from \eqref{recur}.

Thus the right-hand side of \eqref{integral-id} also satisfies the differential equation \eqref{de}. It is easy to see that 
both sides of \eqref{integral-id} agree with each other at $u=0$, and their derivatives agree at $u=0$ as well. 
The differential equation has a regular singular point at $x=0$ with indices $\pm n$. The standard structure of the 
space of solutions shows that if two  solutions $y_{1}, \, y_{2}$ have matching values and derivatives  at $0$, then 
$y_{1} \equiv y_{2}$. This implies that \eqref{integral-id} is true for all $u>0$ and $n\in\mathbb{N}$.

Now substitute \eqref{integral-id} with $u=4\pi\ell$ in \eqref{h2-eval}, and then combine the resultant with 
\eqref{h1-eval} thereby obtaining
\begin{align}\label{bm}
b_{\ell} &= \left( \log \left( 2\pi\ell \right) - \psi(2n+1) \right)  J_{2n}(4\pi\ell) - 
 \sum_{k=1}^{\infty} \frac{(-1)^{k} (2k+ 2n)}{k(k+ 2n)} J_{2k + 2n}(4\pi\ell)\\
&= \left.\frac{\partial}{\partial \nu} J_{\nu}(4\pi\ell)\right|_{\nu=2n},\nonumber
\end{align}
the last step resulting from \eqref{besselj-der}. Since $h(x, n)$ is $1$-periodic and is of class $C^{1}$, 
its Fourier series converges to it. This fact, along with the expressions for $b_0$ and 
$b_{\ell}$ in \eqref{b0} and \eqref{bm} complete the proof of Lemma \ref{lemma-two5}.
\end{proof}
Having proved the above lemmas, we are now ready to complete the proof of Theorem \ref{thm-main-0}. Substitute the results 
of Lemmas \ref{lemma-two4} and \ref{lemma-two5} in Lemma \ref{lemma-bessel-00}, and use the elementary identity 
$\sin^{-1}(x)+\cos^{-1}(x)=\frac{\pi}{2}$ to obtain
\begin{align}\label{finalaxn}
A(n,x)&=(-1)^n\pi\sum_{m=1}^{\infty} Y_{2n}(4\pi m)\cos(2\pi mx)\\
&\quad+\frac{(-1)^{n+1}}{2n}+\frac{1}{2^{2n+1}}\bigg(\sum_{m=1}^{\infty}g(m, n, x)+\sum_{m=1}^{\infty}g(m, n, 1-x)\bigg)\nonumber\\
&\quad-\frac{1}{4}\left(U_{2n-1}\left(\frac{x}{2}\right)+U_{2n-1}\left(\frac{x+1}{2}\right)+U_{2n-1}\left(\frac{1-x}{2}\right)+U_{2n-1}\left(\frac{2-x}{2}\right)\right).\nonumber
\end{align}
Theorem \ref{thm-main-0} now follows from substituting the above representation of $A(n, x)$ in \eqref{formula-ber11} and then 
making use of the fact that $U_{2n-1}(-y)=-U_{2n-1}(y)$.

 \section{Recovering Zagier's formula}
 \label{sec-recovery}
 
 This section shows that Zagier's exact formula \eqref{zagier-sum} follows 
by letting $x \to 1$ on both sides of the result of Theorem \ref{thm-main-0}. As is clear from \eqref{Y-diverge}, 
the series $\begin{displaystyle} (-1)^n\pi\sum_{m=1}^{\infty} Y_{2n}(4 \pi m) \end{displaystyle}$ 
 diverges, and so the passage to the limit requires some care.  
 
 From \eqref{asymbess1}, we easily get the big-O bound 
 \begin{equation*}
 \left( (-1)^{n} \pi Y_{2n}( 4 \pi m ) + \frac{1}{2 \sqrt{m}} \right) \cos( 2 \pi m x) = O_{n}\left( m^{-3/2} \right).
 \end{equation*}
 \noindent
This suggests replacing the term $(-1)^{n} \pi Y_{2n}(4 \pi m )$ 
 by $(-1)^{n} \pi Y_{2n}(4 \pi m ) + 1/(2 \sqrt{m})$ in the statement of Theorem \ref{thm-main-0}. The uniform 
convergence of the series 
\begin{equation*}
\sum_{m=1}^{\infty}\left((-1)^{n} \pi Y_{2n}(4 \pi m ) + \frac{1}{2 \sqrt{m}}\right)\cos(2\pi mx)
\end{equation*}
for $x\in(0,1)$ will then allow us to move the limit inside the sum. The consequences of adding the term $1/(2 \sqrt{m})$ 
are examined first. Note that
 \begin{align}\label{modification}
 &(-1)^{n} \pi\sum_{m=1}^{\infty} Y_{2n}(4 \pi m ) \cos( 2 \pi m x) \\
& = \sum_{m=1}^{\infty} \left( (-1)^{n} \pi Y_{2n}(4 \pi m ) + \frac{1}{2 \sqrt{m}} \right)  \cos( 2 \pi m x) - \sum_{m=1}^{\infty} \frac{\cos( 2 \pi m x)}{2 \sqrt{m}}. \nonumber
   \end{align}
   
  An appropriate modification of the second  series on the right  is obtained from the Hurwitz zeta function 
   and its representation  \cite[p. 257, Theorem 12.6]{apostol-1998a} 
   \begin{equation*}
   \zeta(1-s,x) = \frac{2 \Gamma(s)}{(2 \pi)^{s}} 
   \left( \cos \left( \frac{\pi s }{2 } \right) \sum_{m=1}^{\infty} \frac{\cos( 2 \pi m x)}{m^{s}} + 
    \sin \left( \frac{\pi s }{2 } \right) \sum_{m=1}^{\infty} \frac{\sin( 2 \pi m x)}{m^{s}} \right),
    \end{equation*}
    \noindent
    valid for $0 < x \leq 1$ and Re$(s) > 1$. This expansion is also valid for Re$(s) > 0$ provided 
    $x \neq 1$.  The special case $s  = \tfrac{1}{2}$ is used to obtain 
    \begin{equation*}
    \zeta \left( \tfrac{1}{2}, x \right) = \sum_{m=1}^{\infty} \frac{\cos(2 \pi m x)}{\sqrt{m}} + 
     \sum_{m=1}^{\infty} \frac{\sin(2 \pi m x)}{\sqrt{m}}.
     \end{equation*}
     Thus
      \begin{eqnarray*}
 (-1)^{n} \pi \sum_{m=1}^{\infty} Y_{2n}(4 \pi m ) \cos( 2 \pi m x) & = & 
  \sum_{m=1}^{\infty} \left( (-1)^{n} \pi Y_{2n}(4 \pi m ) + \frac{1}{2 \sqrt{m}} \right)  \cos( 2 \pi m x)   \\
   & & - \frac{1}{2} \zeta \left( \tfrac{1}{2}, x \right) + \frac{1}{2}  \sum_{m=1}^{\infty} \frac{\sin(2 \pi m x)}{ \sqrt{m}}. 
   \end{eqnarray*}
   The last series is simplified using the identity \cite[Equation (9)]{popov-1936a}
  \begin{equation*}
  \sum_{m=1}^{\infty} J_{\nu}(2 \pi m x) = \frac{1}{2 \pi x} - \frac{1}{\pi} x^{\nu} \sin \left( \frac{\pi \nu}{2} \right) 
  \sum_{m=1}^{\infty} \frac{1}{( m + \sqrt{m^{2} - x^{2}})^{\nu} \, \sqrt{m^{2} - x^{2}}}, 
  \end{equation*}
  \noindent
  valid for Re$(\nu) > 0$ and $0 < x < 1$, and which may be established by applying the Poisson summation formula \eqref{poisson} 
  to the function $J_{\nu}(2 \pi x |t|)$.  Now use the particular case $\nu = \tfrac{1}{2}$ and the fact \cite[p.~924, entry 8.464.1]{gradshteyn-2015a} that
  \begin{equation*}
  J_{1/2}(z) = \sqrt{ \frac{2}{\pi z}} \sin z 
  \end{equation*}
  \noindent
  to obtain 
  \begin{equation}
  \label{series-007}
  \sum_{m=1}^{\infty} \frac{\sin( 2 \pi m x)}{\sqrt{m}} = \frac{1}{2 \sqrt{x}} - 
  \frac{x}{\sqrt{2}} \sum_{m=1}^{\infty} \frac{1}{\sqrt{m + \sqrt{m^{2} - x^{2}}} \,\, \sqrt{m^{2}-x^{2}}}.
  \end{equation}
  \noindent
  It follows that 
  \begin{eqnarray*}
 (-1)^{n} \pi\sum_{m=1}^{\infty} Y_{2n}(4 \pi m ) \cos( 2 \pi m x) & = & 
  \sum_{m=1}^{\infty} \left( (-1)^{n} \pi Y_{2n}(4 \pi m )   + \frac{1}{2 \sqrt{m}} \right)  \cos( 2 \pi m x)     \\
 & &    -  \frac{1}{2} \zeta \left( \tfrac{1}{2}, x \right)  
     +   \frac{1}{4 \sqrt{x}}  \nonumber \\
     & & - 
    \frac{x}{2 \sqrt{2}} \sum_{m=1}^{\infty} \frac{1}{\sqrt{m + \sqrt{m^{2} - x^{2}}} \,\, \sqrt{m^{2}-x^{2}}}.
   \end{eqnarray*}
   The result of Theorem \ref{thm-main-0} is now expressed as 
    {\allowdisplaybreaks\begin{eqnarray*}
   B_{2n}^{*}(x) & = & \sum_{m=1}^{\infty} \left( (-1)^{n} \pi Y_{2n}(4 \pi m ) + \frac{1}{2 \sqrt{m}} \right) \cos(2 \pi m x)  -  \frac{1}{2} \zeta \left( \tfrac{1}{2}, x \right)  
     +   \frac{1}{4 \sqrt{x}}  
   \nonumber  \\
     & & - \left( \frac{x}{2 \sqrt{2} \sqrt{1 + \sqrt{1-x^{2}}} \, \sqrt{1-x^{2}}} - 
     \frac{1}{2^{2n+1}} \frac{( 3 - x - \sqrt{(1-x)(5-x)})^{2n}}{\sqrt{(1-x)(5-x)}} \right)  \nonumber \\
     & & - \frac{x}{2 \sqrt{2}} \sum_{m=1}^{\infty} \frac{1}{ \sqrt{ m+1 + 
     \sqrt{ (m+1)^{2} - x^{2}}} \, \sqrt{(m+1)^{2} - x^{2}}}  \nonumber \\
     & & + \frac{1}{4} \left( U_{2n-1} \left( \frac{x+1}{2} \right) + U_{2n-1} \left( \frac{x}{2} \right) + 
     U_{2n-1} \left( \frac{x-1}{2} \right) + U_{2n-1} \left( \frac{x-2}{2} \right) \right)  \nonumber \\
     & & + \frac{1}{2^{2n+1}} \left(\sum_{m=1}^{\infty} g(m,n,x)+\sum_{m=1}^{\infty} g(m+1,n,1-x)\right).
     \end{eqnarray*}}
     Now let $x \uparrow  1$, use the facts that $U_{2n-1}(0) = 0, \, U_{2n-1}(1) = 2n$ and that $U_{2n-1}(x)$ 
		is an odd function of $x$, along with the value 
     \begin{align*}
     &\lim_{x \to 1^{-}} \left( 
     \frac{x}{2 \sqrt{2} \sqrt{1 + \sqrt{1-x^{2}}} \, \sqrt{1-x^{2}}} - 
     \frac{1}{2^{2n+1}} \frac{( 3 - x - \sqrt{(1-x)(5-x)})^{2n}}{\sqrt{(1-x)(5-x)}} \right) \\
     &=\frac{n}{2} - \frac{1}{4 \sqrt{2}},
     \end{align*}
     \noindent
     to obtain 
     \begin{eqnarray*}
     B_{2n}^{*}(1) & = & \sum_{m=1}^{\infty} \left( (-1)^{n} \pi Y_{2n}(4 \pi  m) + \frac{1}{2 \sqrt{m}} \right) 
     - \frac{1}{2} \zeta \left( \frac{1}{2}\right) + \frac{1}{4}  \label{form-mess-11} \\
     & & + \frac{1}{4 \sqrt{2}} - \frac{1}{2 \sqrt{2}} \sum_{m=1}^{\infty} 
     \frac{1}{\sqrt{m (m+2)} \, \sqrt{m+1 + \sqrt{m(m+2)}}}  \nonumber \\
     & & + \sum_{m=1}^{\infty} \frac{1}{\sqrt{m(m+4)}} \left( \frac{ \sqrt{m+4} - \sqrt{m}}{2} \right)^{4n},
     \nonumber 
     \end{eqnarray*}
     \noindent
     where the identity $m+2 - \sqrt{m(m+4)} = \tfrac{1}{2} ( \sqrt{m+4} - \sqrt{m} )^{2}$ was used in the 
     simplification of the resulting series on the extreme right side.   The last step is to use 
     \begin{equation*}
     m+1 - \sqrt{m(m+2)} = \frac{1}{2} \left( \sqrt{m+2} - \sqrt{m} \right)^{2}
     \end{equation*}
     \noindent
     to evaluate, as a telescoping series, 
     \begin{equation*}
     \sum_{m=1}^{\infty} 
     \frac{1}{\sqrt{m (m+2)} \, \sqrt{m+1 + \sqrt{m(m+2)}}}  = \frac{\sqrt{2}+1}{2}.
     \end{equation*}
     \noindent 
     Finally use the result $B_{2n}^{*}(1) = B_{2n}^{*} + n $ \cite[Formula (10.16)]{dixit-2014a}, \cite[p.~5]{zagier-1998a} to
     obtain Zagier's formula \eqref{zagier-sum}.

 \section{Periodicity and the odd-index case}
 \label{sec-periodicity}

 The original motivation that led us to the study of the modified Bernoulli numbers was the curious phenomenon 
 that $\{ B_{2n+1}^{*}: \, n \in \mathbb{N} \}$ is a periodic sequence. Since the proof of Theorem 
 \ref{thm-main-1} is similar to that of Theorem \ref{thm-main-0}, it is omitted. However, a new derivation 
of the periodicity of $\{ B_{2n+1}^{*}\}$ using Theorem \ref{thm-main-1} is given below.

\begin{corollary}
 For $n \in \mathbb{N}$,
 \begin{equation*}
 B_{2n+1}^{*} =  \frac{(-1)^{n}}{4} + \frac{1}{2} U_{2n} \left( \frac{1}{2} \right) = 
 \frac{(-1)^{n}}{4}  + \frac{1}{\sqrt{3}} \sin \left( \frac{(2n+1) \pi}{3} \right).
 \end{equation*}
 \end{corollary}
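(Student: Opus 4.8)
\smallskip
\noindent\textbf{Proposed proof.} The plan is to let $x\to 0^{+}$ in Theorem \ref{thm-main-1}, mirroring the limit $x\to1^{-}$ carried out in Section \ref{sec-recovery}. Since $B_{2n+1}^{*}(x)$ is a polynomial, the left-hand side tends to $B_{2n+1}^{*}$. On the right, the Chebyshev block behaves continuously, but the Bessel series and the first of the two $g$-series each diverge like $(4\sqrt{x})^{-1}$ as $x\to0^{+}$ (by \eqref{Y-diverge}, and because $g(1,n+\tfrac12,x)$ has $\sqrt{x(4+x)}$ in its denominator), so the two divergences must be matched against each other. As in \eqref{modification}, I would first write
\begin{align*}
(-1)^{n}\pi\sum_{m=1}^{\infty}Y_{2n+1}(4\pi m)\sin(2\pi mx)&=\sum_{m=1}^{\infty}\left((-1)^{n}\pi Y_{2n+1}(4\pi m)+\frac{1}{2\sqrt{m}}\right)\sin(2\pi mx)\\
&\quad-\frac{1}{2}\sum_{m=1}^{\infty}\frac{\sin(2\pi mx)}{\sqrt{m}}.
\end{align*}
By \eqref{asymbess1}, $(-1)^{n}\pi Y_{2n+1}(4\pi m)+\tfrac{1}{2\sqrt{m}}=O(m^{-3/2})$, so the first series converges uniformly on $\mathbb{R}$ and vanishes at $x=0$; and by \eqref{series-007},
\begin{equation*}
-\frac{1}{2}\sum_{m=1}^{\infty}\frac{\sin(2\pi mx)}{\sqrt{m}}=-\frac{1}{4\sqrt{x}}+\frac{x}{2\sqrt{2}}\sum_{m=1}^{\infty}\frac{1}{\sqrt{m+\sqrt{m^{2}-x^{2}}}\,\sqrt{m^{2}-x^{2}}},
\end{equation*}
whose last term is $O(x)$ and tends to $0$.

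Next I would extract the $m=1$ term of $\sum_{m}g(m,n+\tfrac12,x)$. Using $2+x-\sqrt{x(4+x)}=\tfrac12(\sqrt{x+4}-\sqrt{x})^{2}$ and the substitution $\sqrt{x}=2\sinh\phi$ (so that $\sqrt{x+4}=2\cosh\phi$ and $\sqrt{x+4}-\sqrt{x}=2e^{-\phi}$), one finds
\begin{align*}
\frac{1}{2^{2n+2}}g\!\left(1,n+\tfrac12,x\right)&=\frac{e^{-(4n+2)\phi}}{4\sinh 2\phi},\\
\frac{1}{2^{2n+2}}g\!\left(1,n+\tfrac12,x\right)-\frac{1}{4\sqrt{x}}&=\frac{1}{8\sinh\phi}\left(\frac{e^{-(4n+2)\phi}}{\cosh\phi}-1\right)\longrightarrow-\frac{2n+1}{4}
\end{align*}
as $\phi\to0^{+}$. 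For the remaining $g$-terms, observe that $g(y,r,x)$ depends on $y$ and $x$ only through the combination $y+x$, so $g(m+1,n+\tfrac12,0)=g(m,n+\tfrac12,1)$; together with the uniform bound $g(m,n+\tfrac12,x)=O(m^{-(2n+2)})$ for $m\ge2$ and $x\in[0,1]$, dominated convergence gives
\begin{equation*}
\frac{1}{2^{2n+2}}\left(\sum_{m=2}^{\infty}g\!\left(m,n+\tfrac12,x\right)-\sum_{m=1}^{\infty}g\!\left(m,n+\tfrac12,1-x\right)\right)\longrightarrow\frac{1}{2^{2n+2}}\left(\sum_{m=2}^{\infty}g\!\left(m,n+\tfrac12,0\right)-\sum_{m=1}^{\infty}g\!\left(m,n+\tfrac12,1\right)\right)=0.
\end{equation*}

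Finally, for the Chebyshev block I would use that $U_{2n}$ is even with $U_{2n}(0)=(-1)^{n}$ and $U_{2n}(1)=2n+1$, so as $x\to0^{+}$ it tends to $\tfrac12 U_{2n}(\tfrac12)+\tfrac{(-1)^{n}}{4}+\tfrac{2n+1}{4}$. Adding all the limits, the two $\tfrac{2n+1}{4}$ terms cancel and one is left with $B_{2n+1}^{*}=\tfrac{(-1)^{n}}{4}+\tfrac12 U_{2n}(\tfrac12)$; the identity $U_{2n}(\cos\theta)=\sin((2n+1)\theta)/\sin\theta$ at $\theta=\pi/3$ then gives $U_{2n}(\tfrac12)=\tfrac{2}{\sqrt{3}}\sin\!\big(\tfrac{(2n+1)\pi}{3}\big)$, which is the second form. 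I expect the main obstacle to be the careful bookkeeping of the two $(4\sqrt{x})^{-1}$ singularities — in particular the singular-limit evaluation of the $m=1$ term of the first $g$-series — along with the justification of the limit--summation interchanges (uniform convergence for the modified Bessel series, dominated convergence for the $g$-series).
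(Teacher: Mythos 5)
Your proposal is correct and follows essentially the same route as the paper's proof: rewrite the Bessel series via \eqref{series-007}, peel off the $m=1$ term of the first $g$-series to cancel the $\tfrac{1}{4\sqrt{x}}$ singularity (the paper evaluates the same limit $-\tfrac{2n+1}{4}$, just without your hyperbolic substitution), pass to the limit $x\to0^{+}$ by dominated convergence, and finish with the Chebyshev parity and special values. The only cosmetic difference is that the paper reindexes the first $g$-sum as $\sum_{m\ge1}g(m+1,n+\tfrac12,x)$ from the outset rather than invoking the $y+x$ translation property explicitly, but the bookkeeping is identical.
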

 \begin{proof}
 The series in Theorem \ref{thm-main-1} may be written as 
 \begin{align*}
 &(-1)^{n} \pi\sum_{m=1}^{\infty} Y_{2n+1}(4 \pi m ) \sin(2 \pi m  x)\\
 & = \sum_{m=1}^{\infty} \left( (-1)^{n} \pi Y_{2n+1}(4 \pi m ) + \frac{1}{2 \sqrt{m}} \right) \sin( 2 \pi m  x) 
 - \frac{1}{2} \sum_{m=1}^{\infty} \frac{\sin( 2 \pi m x )}{\sqrt{m}}  \\
 &= \sum_{m=1}^{\infty} \left( (-1)^{n} \pi Y_{2n+1}(4 \pi m ) + \frac{1}{2 \sqrt{m}} \right)\sin(2 \pi m  x)  - \frac{1}{4 \sqrt{x}} \\
 &\quad+ \frac{x}{2 \sqrt{2}} \sum_{m=1}^{\infty} \frac{1}{\sqrt{m + \sqrt{m^{2} - x^{2}}} \, \sqrt{m^{2}-x^{2}}},
 \end{align*}
 \noindent
 using \eqref{series-007}. Now replace the above representation in the formula from Theorem \ref{thm-main-1} to obtain
\begin{align*}
 B_{2n+1}^{*}(x)  &= \sum_{m=1}^{\infty} \left( (-1)^{n} \pi Y_{2n+1}(4 \pi m ) + \frac{1}{2 \sqrt{m}} \right) 
 \sin(2 \pi m x) \\
 &\quad+\frac{x}{2 \sqrt{2}} \sum_{m=1}^{\infty} \frac{1}{\sqrt{m + \sqrt{m^{2} - x^{2}}} \, \sqrt{m^{2}-x^{2}}}\\
&\quad  +  \frac{1}{4} \left( \frac{(2 + x - \sqrt{x(4+x)})^{2n+1}}{2^{2n} \sqrt{x(4+x)}} - \frac{1}{\sqrt{x}} \right)\\
&\quad+\frac{1}{4} \left( U_{2n} \left( \frac{x+1}{2} \right) +  U_{2n} \left( \frac{x}{2} \right)+ U_{2n} \left( \frac{x-1}{2} \right) +  U_{2n} \left( \frac{x-2}{2} \right)  \right) \\
 &\quad+ \frac{1}{2^{2n+2}}\left(\sum_{m=1}^{\infty} g\left(m+1, n+\tfrac{1}{2},x\right)-\sum_{m=1}^{\infty}g\left(m, n+\tfrac{1}{2},1-x\right)\right).
 \end{align*}
 \noindent
 To obtain the final result, let $x \to 0$ in the identity above. The use of the dominated convergence theorem, the evaluation
 \begin{equation*}
 \lim\limits_{x \to 0} \left( \frac{1}{2^{2n}} \frac{(2 + x - \sqrt{x(4+x)})^{2n+1}}{\sqrt{x(4+x)}} - \frac{1}{\sqrt{x}} \right) = 
 - 2n -1,
 \end{equation*}
 \noindent
 the parity of the Chebyshev polynomials and the special values 
 \begin{equation*}
 U_{2n}(0) = (-1)^{n}, \, U_{2n}(1) = 2n+1, \, U_{2n} \left( \tfrac{1}{2} \right) = \frac{2}{\sqrt{3}} \sin \left( \frac{(2n+1)\pi}{3} 
 \right),
 \end{equation*}
 \noindent
 complete the proof of  the stated formula.
 \end{proof}

\section{Asymptotics of Zagier polynomials}\label{asymptotics}
Corollary \ref{zagier-asymp} is proved here. The proof is given only for the even-index case, the one for odd being similar. 

First assume $x\neq \frac{1}{4}, \frac{3}{4}$. The idea is straightforward: we divide both sides of Theorem \ref{thm-main-0} by $(-1)^{n} \pi Y_{2n}(4 \pi) \cos( 2 \pi x)$ 
and show that the resulting right-hand side approaches $1$ as $n\to\infty$. However, in the case of the Bessel function series, justification of the interchange of the order 
of limit and summation is needed, and which is interesting in its own right. To that end, it is first shown that
\begin{equation}\label{series-limit}
\lim_{n\to\infty}\frac{1}{\left(-1\right)^{n}\pi Y_{2n}\left(4\pi\right)\cos\left(2\pi x\right)}\sum_{m=2}^{\infty}\left(-1\right)^{n}\pi Y_{2n}\left(4\pi m\right)\cos\left(2\pi mx\right)=0.
\end{equation}
Indeed, use \eqref{modification} to rewrite the above limit in the form
\begin{align*}
&\lim_{n\to\infty}\sum_{m=2}^{\infty}\frac{\left(\left(-1\right)^{n}\pi Y_{2n}\left(4\pi m\right)+\frac{1}{2\sqrt{m}}\right)\cos\left(2\pi mx\right)}{\left(-1\right)^{n}\pi Y_{2n}\left(4\pi\right)\cos\left(2\pi x\right)}\\
&\quad-\lim_{n\to\infty}\frac{1}{\left(-1\right)^{n}\pi Y_{2n}\left(4\pi\right)\cos\left(2\pi x\right)}\sum_{m=2}^{\infty}\frac{\cos\left(2\pi mx\right)}{2\sqrt{m}}.\nonumber\label{eq:1}
\end{align*}
The latter limit is equal to zero since \eqref{yorder-asymp} implies that
\begin{equation}\label{Y-denom}
\lim_{n\to\infty}\frac{(-1)^{n}}{Y_{2n}(4\pi)}=0,
\end{equation}
and also because the series $\sum_{m=2}^{\infty}\frac{\cos\left(2\pi mx\right)}{2\sqrt{m}}$ converges for $0<x<1$.

It is now shown that the order of limit and summation can be interchanged in the former limit. This requires the 
hypotheses of Lebesgue's dominated convergence theorem for series to hold, namely that
\begin{equation}\label{lebesgue-1}
\lim_{n\to\infty}\frac{\left(\left(-1\right)^{n}\pi Y_{2n}\left(4\pi m\right)+\frac{1}{2\sqrt{m}}\right)}{\left(-1\right)^{n}\pi Y_{2n}\left(4\pi\right)}=0,
\end{equation}
and that the sequence $\{s_n(m, x)\}_{n=1}^{\infty}$ , where
\begin{equation}\label{lebesgue-2}
s_n(m, x)=\frac{\left(\left(-1\right)^{n}\pi Y_{2n}\left(4\pi m\right)+\frac{1}{2\sqrt{m}}\right)\cos\left(2\pi mx\right)}{\left(-1\right)^{n}\pi Y_{2n}\left(4\pi\right)\cos\left(2\pi x\right)},
\end{equation}
is uniformly bounded in $n$. To prove \eqref{lebesgue-1}, note that \eqref{yorder-asymp} implies that
\begin{equation*}
\lim_{n\to\infty}\frac{Y_{2n}\left(4\pi m\right)}{Y_{2n}\left(4\pi\right)}=\lim_{n\to\infty}m^{-2n}=0
\end{equation*}
as $m\geq 2$. Together with \eqref{Y-denom}, this proves \eqref{lebesgue-1}.

In order to prove \eqref{lebesgue-2}, we first use the fact \cite[p.~927]{gradshteyn-2015a}, \cite[p.~446]{watson-1966a} 
that for $x>0$, the function
\begin{equation*}
x\mapsto x\left[J_{\nu}^{2}\left(x\right)+Y_{\nu}^{2}\left(x\right)\right]
\end{equation*}
decreases monotonically, if $\nu>\frac{1}{2}$. In particular, for $m, n\in\mathbb{N}$,
\[
4\pi m\left[J_{2n}^{2}\left(4\pi m\right)+Y_{2n}^{2}\left(4\pi m\right)\right]\le4\pi\left[J_{2n}^{2}\left(4\pi\right)+Y_{2n}^{2}\left(4\pi\right)\right].
\]
From \cite[p.~227, formula 10.14.1]{olver-2010a}, we have $|J_{\nu}\left(x\right)|\leq 1$ for $\nu\ge0,\thinspace\thinspace x\in\mathbb{R}$, so that
\begin{eqnarray}\label{inter-ineq}
Y_{2n}^{2}\left(4\pi m\right) & \le & J_{2n}^{2}\left(4\pi m\right)+Y_{2n}^{2}\left(4\pi m\right)\le\frac{1}{m}\left[J_{2n}^{2}\left(4\pi\right)+Y_{2n}^{2}\left(4\pi\right)\right]\\
 & \le & \frac{1}{m}\left[1+Y_{2n}^{2}\left(4\pi\right)\right]\nonumber.
\end{eqnarray}
Next, it is shown that for $n\in\mathbb{N}$,
\begin{equation}\label{abs-c}
\frac{1}{Y_{2n}^{2}\left(4\pi\right)}\le c
\end{equation}
for some absolute constant $c$. Note that the asymptotic formula \eqref{yorder-asymp} implies that 
the sequence $\left\{Y_{2n}^{-2}(4\pi)\right\}_{n=1}^{\infty}$ tends to $0$ as $n\to\infty$. 
So for $n$ sufficiently large, say $n\geq n_{0}$, the inequality $\frac{1}{Y_{2n}^{2}(4\pi)}\leq 1$ holds. 

If we can further show that $Y_{2n}(4\pi)\neq 0$ for any $n<n_{0}$, then \eqref{abs-c} will be 
proved as one can take $c$ to be $\text{max}\left(1,Y_{2}(4\pi), Y_{4}(4\pi), \cdots, Y_{2(n_{0}-1)}(4\pi)\right)$. 
To show that this is indeed true is the objective of the following lemma. We could not find a reference to it 
in the literature, and our proof of it is short and nice, hence given here.
\begin{lemma}\label{Y-nonzero}
For any positive integer $n$, $Y_{2n}(4\pi)\neq 0$.
\end{lemma}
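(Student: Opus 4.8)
The plan is to use that $4\pi\approx 12.57$ is small, splitting according to whether the order $2n$ is large enough that $4\pi$ lies below the lowest positive zero of $Y_{2n}$. For the large orders I would argue using only ingredients already present above: by the monotonicity of $x\mapsto x\bigl(J_{\nu}^{2}(x)+Y_{\nu}^{2}(x)\bigr)$ for $\nu>\tfrac12$, invoked just before this lemma, together with the limiting value $\tfrac{2}{\pi}$ of this product as $x\to\infty$, one gets $J_{2n}^{2}(4\pi)+Y_{2n}^{2}(4\pi)\geq\tfrac{1}{2\pi^{2}}$ for all $n\geq 1$; if $Y_{2n}(4\pi)=0$ this forces $\abs{J_{2n}(4\pi)}\geq\tfrac{1}{\pi\sqrt2}$, which contradicts the elementary bound $\abs{J_{\nu}(z)}\leq (z/2)^{\nu}/\Gamma(\nu+1)$ (valid for $\nu\geq 0$, $z\geq 0$) as soon as $(2n)!>\sqrt2\,\pi\,(2\pi)^{2n}$, i.e. for $n\geq 9$. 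Alternatively, invoking the classical fact that every positive zero of $Y_{\nu}$ exceeds $\nu$ when $\nu\geq 0$ (see, e.g., Watson \cite{watson-1966a}), together with the fact that $Y_{2n}$ is negative on $(0,y_{2n,1})$, where $y_{2n,1}$ denotes its first positive zero, already gives $Y_{2n}(4\pi)<0$ whenever $2n>4\pi$, i.e. for $n\geq 7$, which further reduces the number of remaining cases.

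It then remains to treat the finitely many small orders (at most $2n\in\{2,4,\dots,16\}$). For each such order one verifies directly that $4\pi$ is not a zero of $Y_{2n}$: it lies strictly between two consecutive positive zeros of $Y_{2n}$ (below the first zero for the largest orders in this range). The sign of $Y_{2n}(4\pi)$ is not uniform across these orders, but only non-vanishing is needed, and each such strict inequality is certified by evaluating $Y_{2n}(4\pi)$ from its convergent ascending series — the expression for $\pi Y_{n}(z)$ recalled in the proof of Lemma \ref{lemma-first} — truncated with an explicit remainder bound.

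I expect this last step to be the main obstacle, in the sense that there is no algebraic or transcendence shortcut available: the positive zeros of $Y_{m}$ for integer $m$ have no closed form and $4\pi$ is transcendental, so the small-order range genuinely requires verifying by computation that $4\pi$ avoids every zero of $Y_{2n}$. This is routine but mildly delicate for the orders where $4\pi$ lands nearest a zero (notably order $4$, where $4\pi$ is within about $0.2$ of a zero), so a couple of guaranteed digits of accuracy are needed there; the conceptual content is the large-order bound, which clears all but finitely many $n$ in one stroke.
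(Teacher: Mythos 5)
Your proposal is correct and follows essentially the same strategy as the paper: rule out all but finitely many orders by showing that $4\pi$ lies below the first positive zero of $Y_{2n}$ (so that $Y_{2n}(4\pi)<0$ there), and then verify the remaining small orders by direct numerical evaluation. The paper uses Elbert's sharper bound $y_{2n,1}>2n+\pi-\tfrac{1}{2}$ to reduce the finite check to $1\le n\le 4$, whereas your bounds (the monotonicity of $x\bigl(J_{\nu}^{2}(x)+Y_{\nu}^{2}(x)\bigr)$, or the classical $y_{\nu,1}>\nu$) leave a few more cases to check, but the argument is the same in substance.
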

\begin{proof}
Let $y_{\nu,k}$ denote the $k^{\textup{th}}$ zero of the Bessel function $Y_{\nu}\left(x\right)$. The last line on 
page $68$ in \cite{elbert-2001a} implies the inequality 
\begin{equation*}
y_{\nu,k}>\nu+k\pi-\frac{1}{2}\hspace{5mm}(\nu>\tfrac{1}{2}, k\in\mathbb{N}).
\end{equation*}
Thus the first zero of $Y_{2n}(x)$ satisfies $y_{2n,1}>4\pi$ if $2n+\pi-\frac{1}{2}>4\pi$ which happens
when $n>\frac{3\pi}{2}+\frac{1}{4}\approx 4.96239.$ Hence $Y_{2n}\left(4\pi\right)\ne0$
for $n\ge5.$ Also, it can be checked that $Y_{2n}\left(4\pi\right)\ne0$
for $1\le n\le4$, since $Y_{2}\left(4\pi\right)\approx 0.134559,$\,
$Y_{4}\left(4\pi\right)\approx -0.0357975,$ $Y_{6}\left(4\pi\right)\approx -0.14694$
and $Y_{8}\left(4\pi\right)\approx 0.246447$. This completes the proof of the lemma.
\end{proof}
\noindent
The result in Lemma \ref{Y-nonzero} along with the previous discussion now proves \eqref{abs-c}. 

Now divide both sides of \eqref{inter-ineq} by $Y_{2n}^{2}(4\pi)$ and use \eqref{abs-c} to obtain
\begin{equation*}
\frac{Y_{2n}(4\pi m)}{Y_{2n}(4\pi)}\leq \frac{\sqrt{c+1}}{\sqrt{m}},
\end{equation*}
thereby proving \eqref{lebesgue-2}.

Thus, from Lebesgue's dominated convergence theorem, it is seen that
{\allowdisplaybreaks\begin{align*}
&\lim_{n\to\infty}\sum_{m=2}^{\infty}\frac{\left(\left(-1\right)^{n}\pi Y_{2n}\left(4\pi m\right)+\frac{1}{2\sqrt{m}}\right)\cos\left(2\pi mx\right)}{\left(-1\right)^{n}\pi Y_{2n}\left(4\pi\right)\cos\left(2\pi x\right)}\\
&=\sum_{m=2}^{\infty}\lim_{n\to\infty}\frac{\left(\left(-1\right)^{n}\pi Y_{2n}\left(4\pi m\right)+\frac{1}{2\sqrt{m}}\right)\cos\left(2\pi mx\right)}{\left(-1\right)^{n}\pi Y_{2n}\left(4\pi\right)\cos\left(2\pi x\right)}\nonumber\\
&=0\nonumber,
\end{align*}}
as can be seen from \eqref{Y-denom} and \eqref{lebesgue-1}. This proves \eqref{series-limit}.

Our next task is to show that
\begin{equation}\label{g-limit}
\lim_{n\to\infty}\frac{1}{2^{2n+1}\left(-1\right)^{n}\pi Y_{2n}\left(4\pi\right)\cos\left(2\pi x\right)}\left(\sum_{m=1}^{\infty}g\left(m, n, x\right)+\sum_{m=1}^{\infty}g\left(m, n, 1-x\right)\right)=0.
\end{equation}
Again using the series version of Lebesgue's dominated convergence theorem, we observe that as $n\to\infty$,
\begin{equation*}
\sum_{m=1}^{\infty}g\left(m, n, x\right)\sim g\left(1, n, x\right)=\frac{2^{4n}}{\left(2+x+\sqrt{x\left(x+4\right)}\right)^{2n}\sqrt{x\left(x+4\right)}},
\end{equation*}
so that by \eqref{yorder-asymp},
\begin{align*}
&\lim_{n\to\infty}\frac{\sum_{m=1}^{\infty}g\left(m, n, x\right)}{2^{2n+1}\left(-1\right)^{n}\pi Y_{2n}\left(4\pi\right)\cos\left(2\pi x\right)}\\
&=\frac{1}{\cos\left(2\pi x\right)\sqrt{\pi x\left(x+4\right)}}\lim_{n\to\infty}\frac{\left(-1\right)^{n+1}2^{2n-1}\sqrt{n}}{\left(2+x+\sqrt{x\left(x+4\right)}\right)^{2n}}\left(\frac{e\pi}{n}\right)^{2n}\\
&=0.
\end{align*}
Similarly,
\begin{equation*}
\lim_{n\to\infty}\frac{1}{2^{2n+1}\left(-1\right)^{n}\pi Y_{2n}\left(4\pi\right)\cos\left(2\pi x\right)}\sum_{m=1}^{\infty}g\left(m, n, 1-x\right)=0.
\end{equation*}
Thus \eqref{g-limit} is proved.
Finally the fact that
\begin{equation}\label{u-limit}
\lim_{n\to\infty}\frac{\left(U_{2n-1}\left(\frac{x+1}{2}\right)+U_{2n-1}\left(\frac{x}{2}\right)+U_{2n-1}\left(\frac{x-1}{2}\right)+U_{2n-1}\left(\frac{x-2}{2}\right)\right)}{4\left(-1\right)^{n}\pi Y_{2n}\left(4\pi\right)\cos\left(2\pi x\right)}=0
\end{equation}
easily follows from \eqref{chebyshev} and \eqref{yorder-asymp}. Thus \eqref{series-limit}, 
\eqref{g-limit} and \eqref{u-limit} along with Theorem \ref{thm-main-0} prove \eqref{zagier-even-asymp} for $x\neq\frac{1}{4}, \frac{3}{4}$. 

When $x=\frac{1}{4}, \frac{3}{4}$, the first term of the Bessel function series in Theorem \ref{thm-main-0} 
is zero. However, a logic exactly similar to the one above can be worked out starting with the second term of the series, namely, 
$(-1)^{n+1}\pi Y_{2n}(8\pi)$.

\section{Proof of the Zagier-type formula}\label{zagier-type}
This section is devoted to proving Theorem \ref{zagier-type-thm}. Let $x=1/2$ in Theorem \ref{thm-main-0}, and note that $U_{2n-1}(x)$ is an odd function of $x$. This gives
\begin{align}\label{bnshalf}
B_{2n}^{*}\left(\frac{1}{2}\right)=\sum_{m=1}^{\infty}(-1)^{m+n}\pi Y_{2n}(4\pi m)+\frac{1}{2^{2n}}\sum_{m=1}^{\infty}\frac{\left(m+\frac{3}{2}-\sqrt{\left(m-\frac{1}{2}\right)\left(m+\frac{7}{2}\right)}\right)^{2n}}{\sqrt{\left(m-\frac{1}{2}\right)\left(m+\frac{7}{2}\right)}}.
\end{align} 
Now let $x=-3/2$, $k=2$, and replace $n$ by $2n$ in \eqref{lem10.2}. Along with \eqref{bnshalf}, this gives
 \begin{align*}
B_{2n}^{*}\left(-\frac{3}{2}\right)
&=\sum_{m=1}^{\infty}(-1)^{m}\left((-1)^n\pi Y_{2n}(4\pi m)+\frac{1}{2\sqrt{m}}\right)-\frac{\left(\sqrt{2}-1\right)}{2}\zeta\left(\frac{1}{2}\right)\nonumber\\
&\quad-\frac{1}{2}\left(U_{2n-1}\left(\frac{1}{4}\right)+U_{2n-1}\left(\frac{3}{4}\right)\right)\nonumber\\
&\quad+\frac{1}{2^{2n}}\sum_{m=1}^{\infty}\frac{\left(m+\frac{3}{2}-\sqrt{\left(m-\frac{1}{2}\right)\left(m+\frac{7}{2}\right)}\right)^{2n}}{\sqrt{\left(m-\frac{1}{2}\right)\left(m+\frac{7}{2}\right)}},
\end{align*}
where we also used the well-known identity
\begin{equation*}
\sum_{m=1}^{\infty}\frac{(-1)^{m+1}}{m^s}=\left(1-2^{1-s}\right)\zeta(s),
\end{equation*}
valid for Re$(s)>0$.

Add the corresponding sides of the above equation to those of Zagier's formula \eqref{zagier-sum} to obtain
 \begin{align}\label{add}
B_{2n}^{*}\left(-\frac{3}{2}\right)+B_{2n}^{*}&=\sum_{m=1}^{\infty}(1+(-1)^m)\left((-1)^{n}\pi Y_{2n}(4\pi m)+\frac{1}{2\sqrt{m}}\right)\nonumber\\
&\quad-n-\frac{1}{2}\left(U_{2n-1}\left(\frac{1}{4}\right)+U_{2n-1}\left(\frac{3}{4}\right)\right)-\frac{1}{\sqrt{2}}\zeta\left(\frac{1}{2}\right)\nonumber\\
&\quad+\frac{1}{2^{2n}}\bigg\{\sum_{m=1}^{\infty}\frac{\left(m+2-\sqrt{m(m+4)}\right)^{2n}}{\sqrt{m(m+4)}}\nonumber\\
&\quad\quad\quad\quad+\sum_{m=1}^{\infty}\frac{\left(m+\frac{3}{2}-\sqrt{\left(m-\frac{1}{2}\right)\left(m+\frac{7}{2}\right)}\right)^{2n}}{\sqrt{\left(m-\frac{1}{2}\right)\left(m+\frac{7}{2}\right)}}\bigg\}.
\end{align}
Note that
\begin{equation*}
\sum_{m=1}^{\infty}(1+(-1)^m)\left((-1)^{n}\pi Y_{2n}(4\pi m)+\frac{1}{2\sqrt{m}}\right)=2\sum_{m=1}^{\infty}\left((-1)^{n}\pi Y_{2n}(8\pi m)+\frac{1}{2\sqrt{2m}}\right), 
\end{equation*}
and that the sum of the two series in \eqref{add} can be written as
 {\allowdisplaybreaks\begin{align*}
&\sum_{m=1}^{\infty}\bigg\{\frac{\left(\frac{2m}{2}+2-\sqrt{\frac{2m}{2}(\frac{2m}{2}+4)}\right)^{2n}}{\sqrt{\frac{2m}{2}(\frac{2m}{2}+4)}}+\frac{\left(\frac{2m-1}{2}+2-\sqrt{\left(\frac{2m-1}{2}\right)\left(\frac{2m-1}{2}+4\right)}\right)^{2n}}{\sqrt{\left(\frac{2m-1}{2}\right)\left(\frac{2m-1}{2}+4\right)}}\bigg\}\nonumber\\
&=\frac{1}{2^{2n-1}}\sum_{m=1}^{\infty}\frac{\left(m+4-\sqrt{m(m+8)}\right)^{2n}}{\sqrt{m(m+8)}}.
\end{align*}}
This completes the proof.

\section{Some open problems}\label{op}
We conclude this paper by discussing three open problems.
\subsection{Proving Lemmas \ref{lemma-two4} and \ref{lemma-two5} through Poisson summation formula} We begin with a lemma.
\begin{lemma}\label{J-poisson}
For \textup{Re}$(\nu)>0$ and $0<x<1$, the identity
\begin{align*}
&\sum_{m=1}^{\infty}J_{\nu}(4\pi m)\cos(2\pi mx)\nonumber\\
&=\frac{\cos\left(\nu\arcsin\left(\frac{x}{2}\right)\right)}{\sqrt{(4\pi)^{2}-(2\pi x)^{2}}}+\frac{\cos\left(\nu\arcsin\left(\frac{x+1}{2}\right)\right)}{\sqrt{(4\pi)^{2}-(2\pi (x+1))^{2}}}\nonumber\\
&\quad+\frac{\cos\left(\nu\arcsin\left(\frac{1-x}{2}\right)\right)}{\sqrt{(4\pi)^{2}-(2\pi (1-x))^{2}}}+\frac{\cos\left(\nu\arcsin\left(\frac{2-x}{2}\right)\right)}{\sqrt{(4\pi)^{2}-(2\pi (2-x))^{2}}}\nonumber\\
&\quad-\sum_{m=2}^{\infty}\frac{(4\pi)^{\nu}\sin\left(\frac{\nu\pi}{2}\right)}{\sqrt{(2\pi (m+x))^{2}-(4\pi)^{2}}\left(2\pi (m+x)+\sqrt{(2\pi (m+x))^{2}-(4\pi)^{2}}\right)^{\nu}}\nonumber\\
&\quad-\sum_{m=3}^{\infty}\frac{(4\pi)^{\nu}\sin\left(\frac{\nu\pi}{2}\right)}{\sqrt{(2\pi (m-x))^{2}-(4\pi)^{2}}\left(2\pi (m-x)+\sqrt{(2\pi (m-x))^{2}-(4\pi)^{2}}\right)^{\nu}}
\end{align*}
holds.
\end{lemma}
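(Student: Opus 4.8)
The plan is to derive the identity directly from the Poisson summation formula \eqref{poisson} applied to the function $f(t)=J_{\nu}(4\pi t)\cos(2\pi x t)$ on $[0,\infty)$, followed by the classical Weber--Schafheitlin discontinuous integral. Since $\mathrm{Re}(\nu)>0$ we have $f(0)=J_{\nu}(0)=0$, so \eqref{poisson} collapses to
\[
\sum_{m=1}^{\infty}J_{\nu}(4\pi m)\cos(2\pi m x)=\int_{0}^{\infty}J_{\nu}(4\pi t)\cos(2\pi x t)\,dt+\sum_{m=1}^{\infty}\int_{0}^{\infty}J_{\nu}(4\pi t)\cos(2\pi x t)\cos(2\pi m t)\,dt.
\]
Writing $\cos(2\pi x t)\cos(2\pi m t)=\tfrac12\bigl(\cos(2\pi(m+x)t)+\cos(2\pi(m-x)t)\bigr)$ reduces every term to an integral of the form $\int_{0}^{\infty}J_{\nu}(4\pi t)\cos(bt)\,dt$, which is evaluated by the discontinuous integral (see \cite{watson-1966a}, \cite{gradshteyn-2015a}): for $\mathrm{Re}(\nu)>-1$,
\[
\int_{0}^{\infty}J_{\nu}(at)\cos(bt)\,dt=
\begin{cases}
\dfrac{\cos\bigl(\nu\arcsin(b/a)\bigr)}{\sqrt{a^{2}-b^{2}}}, & 0<b<a,\\[3mm]
-\dfrac{a^{\nu}\sin(\nu\pi/2)}{\sqrt{b^{2}-a^{2}}\,\bigl(b+\sqrt{b^{2}-a^{2}}\bigr)^{\nu}}, & 0<a<b.
\end{cases}
\]

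With $a=4\pi$ fixed, the rest is bookkeeping: decide, for $0<x<1$, which shift parameters $b$ fall below $4\pi$. The four ``near'' contributions are $b=2\pi x$ (the isolated integral $\int_{0}^{\infty}J_{\nu}(4\pi t)\cos(2\pi x t)\,dt$), $b=2\pi(1+x)$ (the $m=1$ term of the $(m+x)$ family, since $1+x<2$), and $b=2\pi(1-x)$, $b=2\pi(2-x)$ (the $m=1$ and $m=2$ terms of the $(m-x)$ family, since $1-x,\,2-x<2$); applying the first branch above with $b/a$ equal to $x/2$, $(1+x)/2$, $(1-x)/2$, $(2-x)/2$ produces exactly the four $\cos\bigl(\nu\arcsin(\cdot)\bigr)$ terms in the statement, with the stated denominators $\sqrt{(4\pi)^{2}-(2\pi(\cdot))^{2}}$. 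Every remaining shift, namely $b=2\pi(m+x)$ for $m\ge2$ and $b=2\pi(m-x)$ for $m\ge3$, exceeds $4\pi$, and the second branch yields precisely the two tail sums (from $m=2$ and from $m=3$, respectively) of the lemma. The hypothesis $0<x<1$ is used in an essential way: it places all four near parameters strictly inside $(0,4\pi)$ and all tail parameters strictly outside, so the discontinuity $b=a$ is never touched.

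The main obstacle will be the rigor of the Poisson step: by \eqref{besselj-int} and the large-argument asymptotics \eqref{asymbess1}, the integrand $J_{\nu}(4\pi t)\cos(2\pi x t)$ decays only like $t^{-1/2}$ while oscillating, so it is \emph{not} of bounded variation on $[0,\infty)$ and \eqref{poisson} does not literally apply --- the same delicacy already present in the derivation of Popov's identity used in Section \ref{sec-recovery}. I would handle it by inserting an exponential regulator, applying \eqref{poisson} to $f_{\varepsilon}(t)=e^{-\varepsilon t}J_{\nu}(4\pi t)\cos(2\pi x t)$ (continuous, absolutely integrable, of bounded variation), evaluating the resulting Laplace-type integrals in closed form, and letting $\varepsilon\to0^{+}$: on the right the closed forms tend to the discontinuous integrals above away from $b=4\pi$, while on the left Abel summation recovers the conditionally convergent series $\sum_{m=1}^{\infty}J_{\nu}(4\pi m)\cos(2\pi m x)$. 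Once Lemma \ref{J-poisson} is in hand, it can be differentiated in $\nu$ and combined with \eqref{besselj-der} and the series \eqref{form-PQ1}--\eqref{form-PQ2} for $P_{n}$ and $Q_{n}$ to recover Lemmas \ref{lemma-two4} and \ref{lemma-two5} without the Fourier-coefficient computations of Section \ref{sec-proof-main}, which is the purpose of this subsection.
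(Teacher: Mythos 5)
Your proposal is correct and is essentially the paper's own proof: the authors likewise apply the Poisson summation formula \eqref{poisson} to $f(t)=J_{\nu}(4\pi|t|)\cos(2\pi x|t|)$ and invoke the discontinuous integral (Gradshteyn--Ryzhik 6.671.2), performing the same case split according to whether the shifts $2\pi(m\pm x)$ lie below or above $4\pi$. Note only that your displayed Poisson identity drops the factor $2$ in front of the sum of cosine-weighted integrals coming from \eqref{poisson}; restoring it, the product-to-sum formula gives each shifted integral coefficient $1$, exactly as your subsequent bookkeeping (and the lemma) requires.
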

\begin{proof}
Apply the Poisson summation formula \eqref{poisson} with $f(t)=J_{\nu}(4\pi |t|)\cos(2\pi x|t|)$, Re $\nu>0$, and use 
the integral evaluation \cite[p.~717, 6.671.2]{gradshteyn-2015a} \footnote{The condition that $\alpha$ 
and $\beta$ be positive is missing in the reference. But it can be found, for example, in \cite[p.~398, Section 13.4]{watson-1966a}.}
\begin{align*}
\int_{0}^{\infty}J_{\nu}(\alpha t)\cos(\beta t)\, dt=
\begin{cases}
\frac{\cos\left(\nu\arcsin\frac{\beta}{\alpha}\right)}{\sqrt{\alpha^{2}-\beta^{2}}},\hspace{3mm} (0<\beta<\alpha),\\
\frac{-\alpha^{\nu}\sin\left(\frac{\nu\pi}{2}\right)}{\sqrt{\beta^{2}-\alpha^{2}}\left(\beta+\sqrt{\beta^{2}-\alpha^{2}}\right)^{\nu}},\hspace{3mm} (0<\alpha<\beta).
\end{cases}
\end{align*}
\end{proof}

\subsubsection{An alternative approach to proving Lemma \ref{lemma-two4}}

Consider the double series
\begin{equation}\label{ixn}
\mathfrak{I}(x, n):=\sum_{k=1}^{\infty}\sum_{m=1}^{\infty}\frac{1}{k}\left(J_{2n+2k}(4\pi m)-J_{2n-2k}(4\pi m)\right)\cos(2\pi mx).
\end{equation}
We wish to use Lemma \ref{J-poisson} twice to simplify the result.  Observe that it requires 
Re$(\nu)>0$, so this requires to treat the case $k=n$ separately. In this situation, entry $8.522.1$ 
in \cite{gradshteyn-2015a} states that
\begin{multline}\label{j0}
    \sum_{m=1}^{\infty} J_{0}(m u ) \cos( m u v) \\ 
		=- \frac{1}{2} + 
    \sum_{\ell=1}^{j} \frac{1}{\sqrt{u^{2} - (2 \pi \ell + uv)^{2}}} + \frac{1}{u \sqrt{1-v^{2}}} + 
    \sum_{\ell=1}^{d} \frac{1}{\sqrt{u^{2} - (2 \pi \ell - uv)^{2}}},
    \end{multline}
    \noindent
    where $u >0, \, 0 \leq v < 1, \, 2 \pi j < u(1-v) < 2(j+1)\pi, \, 2 d \pi < u(1+v) < 2(d+1) \pi$ and $j+1, \, d+1 \in \mathbb{N}$. 
Now use Lemma \ref{J-poisson} twice in \eqref{ixn}, and also use \eqref{j0} to obtain after some simplification
\begin{align*}
\mathfrak{I}(x, n)&=\frac{1}{2n}-\frac{1}{\pi}\sum_{k=1}^{\infty}\frac{1}{k}\bigg\{\frac{\sin\left(2n\sin^{-1}\left(\frac{x}{2}\right)\right)\sin\left(2k\sin^{-1}\left(\frac{x}{2}\right)\right)}{\sqrt{4-x^2}}\nonumber\\
&\quad\quad\quad\quad\quad\quad\quad\quad+\frac{\sin\left(2n\sin^{-1}\left(\frac{x+1}{2}\right)\right)\sin\left(2k\sin^{-1}\left(\frac{x+1}{2}\right)\right)}{\sqrt{4-(x+1)^2}}\nonumber\\
&\quad\quad\quad\quad\quad\quad\quad\quad+\frac{\sin\left(2n\sin^{-1}\left(\frac{1-x}{2}\right)\right)\sin\left(2k\sin^{-1}\left(\frac{1-x}{2}\right)\right)}{\sqrt{4-(1-x)^2}}\nonumber\\
&\quad\quad\quad\quad\quad\quad\quad\quad+\frac{\sin\left(2n\sin^{-1}\left(\frac{2-x}{2}\right)\right)\sin\left(2k\sin^{-1}\left(\frac{2-x}{2}\right)\right)}{\sqrt{4-(2-x)^2}}\bigg\}.
\end{align*}
The definition in \eqref{chebyshev} and the classical formula \cite[p. 46, formula 1.441.1]{gradshteyn-2015a}
      \begin{equation*}
      \sum_{j=1}^{\infty} \frac{\sin j \theta}{j} = \frac{\pi - \theta}{2}, \quad \text{ for } 0 < \theta < 2 \pi
      \end{equation*}
      \noindent
			now yield
\begin{align*}
\mathfrak{I}(x, n)&=\frac{1}{2n}+\frac{(-1)^n}{2\pi}\bigg\{\cos^{-1}\left(\frac{x}{2}\right)U_{2n-1}\left(\frac{x}{2}\right)+\cos^{-1}\left(\frac{x+1}{2}\right)U_{2n-1}\left(\frac{x+1}{2}\right)\nonumber\\
&\quad\quad+\cos^{-1}\left(\frac{1-x}{2}\right)U_{2n-1}\left(\frac{1-x}{2}\right)+\cos^{-1}\left(\frac{2-x}{2}\right)U_{2n-1}\left(\frac{2-x}{2}\right)\bigg\}.
\end{align*}
Comparing with the result of Lemma \ref{lemma-two4} and the identity \eqref{form-PQ1}, it is easily seen that Lemma \ref{lemma-two4} 
can be proved this way if the following problem, which we leave for the interested reader, can be solved.

\textbf{Problem 1.} Prove that for $0<x<1$ and $n\in\mathbb{N}$,
\begin{align*}
&\sum_{k=1}^{\infty}\sum_{m=1}^{\infty}\frac{1}{k}\left(J_{2n+2k}(4\pi m)-J_{2n-2k}(4\pi m)\right)\cos(2\pi mx)\\
&=\sum_{m=1}^{\infty}\sum_{k=1}^{\infty}\frac{1}{k}\left(J_{2n+2k}(4\pi m)-J_{2n-2k}(4\pi m)\right)\cos(2\pi mx).
\end{align*}

\subsubsection{An alternative approach to proving Lemma \ref{lemma-two5}}
Using Lemma \ref{J-poisson}, it can be seen, after some simplification, that
\begin{align*}
&\lim_{\nu\to 2n}\frac{\partial}{\partial\nu}\sum_{m=1}^{\infty}J_{\nu}(4\pi m)\cos(2\pi mx)\\
 & =\frac{(-1)^{n}}{4 \pi} \bigg\{ \sin^{-1} \left( \frac{x}{2} \right) U_{2n-1} \left( \frac{x}{2} \right) + 
   \sin^{-1} \left( \frac{x+1}{2} \right) U_{2n-1} \left( \frac{x+1}{2} \right)  \\
   & \quad \quad \quad \quad\quad + \sin^{-1} \left( \frac{1-x}{2} \right) U_{2n-1} \left( \frac{1-x}{2} \right) + 
    \sin^{-1} \left( \frac{2-x}{2} \right) U_{2n-1} \left( \frac{2-x}{2}  \right) \bigg\} \\
    &\quad+\frac{(-1)^{n+1}}{4^{n+1}} \bigg(\sum_{m=1}^{\infty}g(m, n, x)+\sum_{m=1}^{\infty}g(m, n, 1-x)\bigg).
    \end{align*}
		If we compare this result with that in Lemma \ref{lemma-two5}, it is easily seen that one can prove the lemma 
		in this alternative way provided the following interchange of the order of summation can be proved.

\textbf{Problem 2.} Prove that for Re$(\nu)>0$, $0<x<1$ and $n\in\mathbb{N}$,
\begin{align*}
\sum_{m=1}^{\infty} \left.\frac{\partial }{\partial \nu}J_{\nu}(4 \pi m)\right|_{\nu=2n} \cos(2 \pi m x)=\lim_{\nu\to 2n}\frac{\partial}{\partial\nu}\sum_{m=1}^{\infty}J_{\nu}(4\pi m)\cos(2\pi mx).
\end{align*}
This, too, is left as an open problem for the reader to prove.

\textbf{Remark.} Problems analogous to above can be formulated for the odd-indexed case. 

\subsection{A relation between $B_{n}^{*}\left(-\frac{3}{2}\right)$ and $B_{n}^{*}$?}

Theorem \ref{zagier-type-thm} arose while trying to find a relation between 
$B_{n}^{*}\left(-\frac{3}{2}\right)$ and $B_{n}^{*}$ similar to the relation \cite[p.~4]{temme-1996a} 
\begin{equation*}
B_{n}\left(\frac{1}{2}\right)=\left(2^{1-n}-1\right)B_n
\end{equation*}
that exists for Bernoulli polynomials. Note that while the result $B_{n}(1-x)=(-1)^nB_{n}(x)$ shows symmetry 
along $x=1/2$, it was established in \cite[Theorem 11.1]{dixit-2014a} that
\begin{equation}\label{refsym}
B_{n}^{*}(-x-3)=(-1)^nB_{n}^{*}(x),
\end{equation} 
so that the symmetry for Zagier polynomials is along $x=-3/2$. Since the theory of Zagier polynomials nicely parallels that of
the Bernoulli polynomials, it is reasonable to look for a relation between $B_{n}^{*}\left(-\frac{3}{2}\right)$ 
and $B_{n}^{*}$. Of course, it is clear from \eqref{refsym} that $B_{2n+1}^{*}\left(-\frac{3}{2}\right)=0$. On 
the other hand, the sequence $\{B_{2n+1}^{*}\}$ is $6$-periodic and takes the values 
$\{ \tfrac{3}{4}, - \tfrac{1}{4}, \, - \tfrac{1}{4}, \, 
\tfrac{1}{4}, \, \tfrac{1}{4}, \, - \tfrac{3}{4}  \}$, readily implying
\begin{equation*}
B_{2n+1}^{*}\left(-\frac{3}{2}\right)=0\cdot B_{2n+1}^{*}.
\end{equation*}
Hence only the case of even indices is of real interest. We have not been able to find such a relation yet, if at all it exists.

\medskip

\no
\textbf{Acknowledgments}. 
The authors wish to thank Karl Mahlburg of Louisiana State University for interesting 
discussions on this subject. 
The third author acknowledges the 
partial support of NSF-DMS 1112656. The first author was a post-doctoral 
fellow funded in part by the same grant.  The work of the fourth author was partially supported by the iCODE Institute,
Research Project of the Idex Paris-Saclay.

\end{document}